\newtheorem{theorem}{Theorem}[section]
\newtheorem{proposition}[theorem]{Proposition}
\newtheorem{lemma}[theorem]{Lemma}
\newtheorem{corollary}[theorem]{Corollary}
\newtheorem{conjecture}[theorem]{Conjecture}
\theoremstyle{definition}
\theoremstyle{remark}
\let\emph\relax 
\DeclareTextFontCommand{\emph}{\bfseries\em}
\DeclareMathOperator{\radii}{radii}
\DeclareMathOperator{\trdeg}{tr~deg}
\tikzstyle{edge}=[line width=1.5pt,black]
\tikzstyle{vertex}=[fill=black,circle,inner sep=0pt, minimum size=5.5pt]
\title{Identifying contact graphs of sphere packings with generic radii}
\author{Sean Dewar\thanks{School of Mathematics, University of Bristol. E-mail: \texttt{sean.dewar@bristol.ac.uk}}
}
\begin{document}
\date{}
\maketitle

\begin{abstract}
	Ozkan et al.~\cite{meeraetal} conjectured that any packing of $n$ spheres with generic radii will be stress-free, and hence will have at most $3n-6$ contacts.
	In this paper we prove that this conjecture is true for any sphere packing with contact graph of the form $G \oplus K_2$,
	i.e., the graph formed by connecting every vertex in a graph $G$ to every vertex in the complete graph with two vertices.
	We also prove the converse of the conjecture holds in this special case:
	specifically, a graph $G \oplus K_2$ is the contact graph of a generic radii sphere packing if and only if $G$ is a penny graph with no cycles.
\end{abstract}

{\small \noindent \textbf{MSC2020:} 05B40, 52C17, 52C25}

{\small \noindent \textbf{Keywords:} sphere packing, generic radii, contact graph, coordination number, algebraic functions}

\section{Introduction}\label{sec:intro}

In our context a sphere packing will always refer to a finite set spheres in $\mathbb{R}^3$ with disjoint interiors,
and a circle packing will always refer to a finite set of circles in $\mathbb{R}^2$ with disjoint interiors.
A vital structure associated with any sphere packing (respectively, circle packing) $P=\{S_v :v \in V\}$ is its \emph{contact graph},
the (finite simple) graph $G=(V,E)$ where a distinct vertices $v,w$ are adjacent if and only if the corresponding spheres $S_v$ and $S_w$ are in contact (or equivalently, are tangent).
The circle packing theorem (see for example \cite{stephenson}) states that a graph is the contact graph of a circle packing if and only if the graph is planar,
the latter property being determinable in linear time \cite{HopcroftTarjan}.
In contrast,
determining whether a graph is the contact graph of a sphere packing is NP-Hard (see \Cref{sec:prelim-sphere} for more detail).
Even determining the maximum possible number of contacts for a packing of $n$ spheres (denoted here by $\alpha_n$) remains open.
The current best bounds for $\alpha_n$ when $n$ is sufficiently large are $6.30 n < \alpha_n < 6.96 n$;
see \cite{EKZ03} and \cite{Glas20} for the lower and upper bounds respectively.

In this paper we instead ask a slightly different question:
how many contacts can a sphere packing with randomly chosen radii have?
For example, while 5 spheres can be in mutual contact (and so $\alpha_5 = 10$),
the radius of the fifth sphere is uniquely determined by the four previous spheres.
The following simple heuristic argues that the number of contacts should in fact be less than half the value of $\alpha_n$:
Suppose that $P$ is a packing of $n-1$ spheres with $k$ contacts.
If we add a new sphere $S$ to $P$ then we would expect the new packing to have at most $k+3$ contacts,
since $S$ can only be in contact with more than 3 other spheres if its radius is carefully chosen to do so.
Hence we would expect that $n$ spheres with randomly chosen radii would have roughly $3n$ contacts between them.

This idea of ``random radii implies low contact number'' was first codified by Ozkan et al.~\cite{meeraetal} in what we refer to as the \emph{genericity conjecture for sphere packings}.
The conjecture states a somewhat stronger property than a bound on the number of contacts and uses the language of \emph{rigidity theory}, the study of the kinematic properties of discrete structures.
It claims that when the radii of a sphere packing are \emph{generic} (i.e., form an algebraically independent set\footnote{There is a technical caveat here: whenever we refer to a finite indexed set $\{a_i: i \in I\} \subset \mathbb{R}$ being algebraically independent, we also assume that the set has $|I|$ elements (i.e., each element $a_i$ is distinct). This is to avoid cases such as $I= \{1,2\}$ and $a_1=a_2 = \pi$, as while the set $S=\{a_i : i \in I\} = \{\pi\}$ is technically algebraically independent, its indexed elements satisfy the polynomial equation $a_1-a_2=0$.}),
the packing will be \emph{stress-free} when considered as a \emph{bar-and-joint framework} (see \Cref{sec:prelim-sphere} for rigorous definitions of these concepts).
Using a famous result of Maxwell -- namely, that any stress-free bar-and-joint framework in $\mathbb{R}^3$ with $n$ joints has at most $3n-6$ bars \cite{maxwell} -- we obtain the following statement for the conjecture.

\begin{conjecture}[Genericity Conjecture for Sphere Packings]\label{conj:main}
	Let $G=(V,E)$ be the contact graph of a sphere packing $P$ with generic radii.
	Then $P$ is stress-free,
	and hence $|E| \leq 3|V|-6$ if $|V| \geq 3$.
\end{conjecture}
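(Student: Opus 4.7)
The plan is to attack the conjecture algebraically by setting up the configuration space of sphere-packing realisations and using dominance to extract linear equations on any self-stress from the algebraic independence of the radii. Concretely, I would work with the real algebraic variety
\[ X_G := \bigl\{(p, r) \in (\mathbb{R}^3)^V \times \mathbb{R}_{>0}^V : \|p_u - p_v\|^2 = (r_u + r_v)^2 \text{ for every } uv \in E\bigr\} \]
and the projection $\pi : X_G \to \mathbb{R}^V$. Since $r$ is algebraically independent and lies in $\pi(X_G)$, the irreducible component $Y \subseteq X_G$ containing $(p, r)$ must project dominantly onto $\mathbb{R}^V$: otherwise $\pi(Y)$ would be a proper real algebraic subvariety containing the generic point $r$. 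Dominance guarantees that every $\dot{r} \in \mathbb{R}^V$ is the $r$-projection of some tangent vector $(\dot{p}, \dot{r}) \in T_{(p, r)} Y$.

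Next I would convert this freedom into a new equation on any stress. For $\omega^T R(G, p) = 0$, contracting with a tangent direction and substituting the differentiated edge constraint $(p_u - p_v)\cdot(\dot{p}_u - \dot{p}_v) = (r_u + r_v)(\dot{r}_u + \dot{r}_v)$ gives
\[ 0 = \omega^T R(G, p)\dot{p} = \sum_{uv \in E} \omega_{uv}(r_u + r_v)(\dot{r}_u + \dot{r}_v), \]
and letting $\dot{r}$ range over $\mathbb{R}^V$ yields the \emph{radial balance identity} $\sum_{v \sim w} \omega_{wv}(r_w + r_v) = 0$ at every vertex $w$. With the rescaling $\tilde{\omega}_{wv} := \omega_{wv}(r_w + r_v)$ and unit normals $u_{wv} := (p_w - p_v)/(r_w + r_v)$, this equivalently says that $\tilde{\omega}$ is an equilibrium stress in $\mathbb{R}^4$ on the edge directions $(1, u_{wv})$. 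A naive count shows these $|V|$ new equations together with the $3|V| - 6$ independent stress equations already force $\omega = 0$ whenever $|E| \leq 4|V| - 6$.

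To cover denser graphs -- and such graphs genuinely arise as contact graphs for specially tuned radii, as the lower bound $\alpha_n \geq 6.30\, n$ of \cite{EKZ03} shows -- I would bootstrap on higher-order information. Since $\omega$ is algebraic in $(p, r)$ along $Y$, differentiating $\omega^T R(G, p) = 0$ twice along tangents of $Y$ and using the first-order radial balance to cancel the $\ddot{r}$-contributions produces the quadratic identity
\[ \sum_{uv \in E} \omega_{uv}\bigl(\|\dot{p}_u - \dot{p}_v\|^2 - (\dot{r}_u + \dot{r}_v)^2\bigr) = 0, \]
valid on every second-order flex of $Y$. Polarising and feeding a basis of such flexes back into the dominance machinery yields further linear equations on $\omega$, and the Möbius group action on sphere packings -- which preserves tangencies while rescaling radii in an algebraically non-trivial way -- supplies an independent family of symmetries that one can push through the same argument to generate still further constraints.

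The hard part will be showing that this bootstrap actually closes the system uniformly in $G$. The first-order radial balance identity is clean, but isolating the second-order and Möbius-equivariant contributions as genuinely independent linear constraints on $\omega$ -- rather than as relations that could degenerate into consequences of the first-order ones for pathological $G$ -- is where the principal difficulty will concentrate, in sharp contrast to structured families such as $G \oplus K_2$ where the two apex vertices supply additional rigidity essentially for free. If the bootstrap falls short, a fallback is an inductive matroidal argument: locate a suitably low-degree vertex, apply the conjecture inductively to the packing that remains after removal, and then use the radial balance identity at the removed vertex to obstruct any stress from extending back through it.
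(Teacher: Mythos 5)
First, a point of comparison: the statement you set out to prove is stated in the paper as an \emph{open conjecture}. The paper proves it only for contact graphs of the form $G \oplus K_2$ (\Cref{t:main}), and by an entirely different route: a M\"{o}bius transformation puts the packing into a standard form with two unit apex spheres, forcing the remaining centres into a plane; an algebraic-function argument then shows any cycle in $G$ would create an algebraic dependence among the radii, so $G$ is a forest; and stress-freeness follows by an inductive vertex-removal argument. So your proposal is an attempt at the general statement, which the paper does not claim, and it must be judged on its own; as written it has genuine gaps.

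The central gap is the step ``dominance guarantees that every $\dot r \in \mathbb{R}^V$ is the $r$-projection of some tangent vector at $(p,r)$.'' Dominance of $\pi|_Y$ only gives surjectivity of the differential on a dense open subset of $Y$ (smooth points of $Y$ that are non-critical for $\pi$); at the particular packing point $(p,r)$ the differential can fail to be surjective, and genericity of $r$ does not rescue this, because the locus where $d\pi$ degenerates is a proper subvariety of $Y$ whose image can nevertheless be dense in $\mathbb{R}^V$. This is exactly the obstruction the paper emphasizes: the proof of \Cref{t:genconj2} in the plane rests on the packing space being a smooth semi-algebraic set of the expected dimension, and \Cref{c:pennysphere} shows that in $\mathbb{R}^3$ there are contact graphs with more than $4|V|$ edges, so no such smoothness or expected-dimension statement holds in general; assuming the tangent-lifting property at the given packing is implicitly assuming the kind of regularity whose failure is the whole difficulty, and arguing that generic-radii packings are sparse enough to avoid the problem would be circular, since that sparsity is the conclusion. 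Second, even granting the radial balance identity, the assertion that it forces $\omega=0$ whenever $|E|\le 4|V|-6$ is only a count of equations: nothing shows the $|V|$ radial equations together with the equilibrium equations are independent as constraints on $\omega$; in two dimensions that independence is precisely the content of the Cauchy--Alexandrov-type stress lemma applied to the planar contact graph, and no analogue is available for $\mathbb{R}^4$-lifted frameworks on sphere-packing contact graphs. Third, the second-order/M\"{o}bius ``bootstrap'' is not carried out (you acknowledge it may not close), and the fallback induction cannot work as stated, because one has no a priori bound on the minimum degree of the contact graph of a generic-radii sphere packing. In short, the proposal reproduces the first-order part of the Connelly--Gortler--Theran strategy but does not supply the new idea needed where that strategy is known to break down in three dimensions, which is why the statement remains a conjecture and why the paper restricts to $G\oplus K_2$, where the two apex vertices allow a reduction to planar, algebraically tractable data.
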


The 2-dimensional analogue of \Cref{conj:main} (\Cref{t:genconj2}) was proven by Connelly, Gortler and Theran \cite{congortheran2019} using techniques linked to the Cauchy–Alexandrov stress lemma \cite[Lemma 5.3]{gluck}.
Their proof requires that the set of all circle packings with contact graph $G=(V,E)$ form a smooth $(3|V|-|E|)$-dimensional semi-algebraic set.
Unfortunately the 3-dimensional analogue of this statement -- in that the set of all sphere packings with contact graph $G=(V,E)$ forms a smooth $(4|V|-|E|)$-dimensional semi-algebraic set -- is not true in general since there exist contact graphs of sphere packings with more than $4|V|$ edges; see \Cref{c:pennysphere} for a method for constructing an infinite (and also very relevant) family of such sphere packing contact graphs.
It is open whether this method can somehow be adapted for spherical packings,
although it is likely any such adaptation would require a much better understanding of the possible types of contact graphs for general sphere packings.
Similar 2-dimensional analogues of \Cref{conj:main} were proven by the author for homothetic packings of certain types of convex bodies \cite{dew21} and for homothetic packings of squares \cite{dew22}.

In this paper we shall prove the following stronger variation of \Cref{conj:main} for a specific family of graphs.
Here we recall that a graph is a \emph{penny graph} if it is the contact graph of a packing of unit radius circles,
and the join $G_1 \oplus G_2$ of a pair of graphs $G_1,G_2$ is the graph formed by connecting every vertex in $G_1$ to every vertex in $G_2$.

\begin{theorem}\label{t:main}
	A graph $G \oplus K_2$ is the contact graph of a sphere packing with generic radii if and only if $G$ is both a forest and a penny graph.
	Furthermore,
	any sphere packing with generic radii and contact graph $G \oplus K_2$ is stress-free.
\end{theorem}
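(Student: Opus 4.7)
The plan is to prove both directions of the equivalence and the stress-freeness claim, using a M\"obius inversion through the tangency point of the two $K_2$-spheres as a recurring tool. Throughout, write $S_1, S_2$ for the spheres corresponding to the two $K_2$-vertices $v_1, v_2$, and $S_v$ ($v \in V(G)$) for the remaining spheres.

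First I would show that in any realization of $G \oplus K_2$ (generic or not), the graph $G$ must be a penny graph. Invert through the tangency point $p$ of $S_1, S_2$: since both $S_1, S_2$ contain $p$, they map to parallel planes $\Pi_1, \Pi_2$, while each $S_v$, being tangent to both planes and avoiding $p$, maps to a sphere of a common radius equal to half the plane spacing, centered on the midplane. Orthogonal projection to the midplane is tangency-preserving and injective on these centers, exhibiting $G$ as the contact graph of a congruent-disk packing.

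Now assume $P$ has generic radii. The center of each $S_v$ lies on a circle $C_v$ (cut out by the tangencies with $S_1, S_2$) and is parametrized by an angle $\theta_v \in S^1$; the tangency $|c_u - c_v| = r_u + r_v$ between two small spheres then becomes $\cos(\theta_u - \theta_v) = f_{uv}(r)$ for an explicit algebraic function $f_{uv}$ of $r_1, r_2, r_u, r_v$. If $G$ contained a cycle, concatenating these angular constraints around the cycle would yield $\prod_j \bigl(f_j + \mathrm{i}\,\epsilon_j\sqrt{1-f_j^2}\bigr) = 1$ for some sign choice $\epsilon_j \in \{\pm 1\}$, a nontrivial algebraic relation on the radii violated by generic radii; hence $G$ is a forest. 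For stress-freeness, I would run a Henneberg-style induction, ordering the small vertices so that each tree component of $G$ is built from a root outward. Starting from the stress-free framework $K_2$ on $\{v_1, v_2\}$, each root $v$ is added as a $2$-valent extension, which preserves stress-freeness since $c_v$ lies off the $c_1 c_2$-axis; each non-root $v$ with tree-parent $u$ is added as a $3$-valent extension on $\{v_1, v_2, u\}$, which preserves stress-freeness unless $c_v, c_{v_1}, c_{v_2}, c_u$ are coplanar --- an event equivalent to $f_{uv}(r) = \pm 1$ and hence excluded by genericity.

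For the converse, given $G$ both a penny graph and a forest, take any penny packing realization of $G$, lift it to a sandwich of congruent unit spheres between two parallel planes, and invert through a point off the midplane to turn the planes into tangent spheres. This produces a realization of $G \oplus K_2$ with some specific radii, stress-free by the preceding Henneberg argument. Consequently the extended rigidity matrix (with radii columns adjoined) has full row rank, so the radii map from the local configuration space is a submersion; its image contains a Euclidean neighborhood of the starting radii, and since the disjoint-interior and non-tangency conditions at non-edges are all open, small perturbations preserve the contact graph and reach generic radii. The main technical point to verify is that the cycle relation and the condition $f_{uv}(r) = \pm 1$ each cut out \emph{proper} algebraic subvarieties of the radii space; both reduce to explicit computations with the circles $C_v$, routine but essential to grounding the genericity arguments.
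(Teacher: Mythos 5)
Your high-level plan tracks the paper's decomposition fairly closely: Möbius inversion through the $S_1\cap S_2$ tangency point to get the penny-graph reduction (Kirkpatrick--Rote), an algebraic-dependence argument to rule out cycles, a Henneberg-style $3$-valent induction for stress-freeness, and a constructive argument for the converse. However, there is a genuine and substantial gap in the forest direction, which you explicitly defer and label ``routine.''

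The claim that the cycle relation
\[
\prod_j \bigl(f_j + \mathrm{i}\,\epsilon_j\sqrt{1-f_j^2}\bigr) = 1
\]
cuts out a \emph{proper} algebraic subvariety of radii space is the entire content of the hard half of the theorem, and it is not routine. Note that each $f_j = f_{u v}(r_1,r_2,r_u,r_v)$ is itself a nontrivial algebraic (not polynomial) function, and the cycle of constraints, when unpacked, is exactly a chain of coupled quadratic equations relating the angular coordinates. The paper handles this in \Cref{l:specialframework}, whose proof is about a page and a half: it requires \Cref{l:implicit} (eliminating a pair of coordinates coupled by two circle equations, which itself needs the auxiliary algebraic-function machinery of \Cref{app:algebraic functions}), a two-sided induction around the cycle to localize $x_i,y_i \in \mathbb{A}_D(X_1,X_i)$, and finally a Jacobian-determinant argument to show that an identically-vanishing constraint would force a linear dependence among vectors of the form $(x_{j\pm1}, y_{j\pm1}, r_{j\pm1}-1)$, which in turn yields an explicit quadratic relation in the radii -- a contradiction. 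None of this is supplied or even sketched in your proposal, and without it the genericity argument has no force: you have written down a constraint and asserted, rather than proven, that it is nondegenerate. The same gap affects your stress-freeness step, where you dismiss the coplanarity case via ``$f_{uv}(r)=\pm1$ is excluded by genericity''; the paper instead invokes the Connelly--Gortler--Theran $2$-dimensional genericity theorem directly (\Cref{t:genconj2}) after observing that coplanarity produces a generic-radii \emph{circle} packing with too many edges, which is a cleaner route that sidesteps needing to know $f_{uv}$ explicitly.

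Two smaller remarks. First, your converse direction (lift a penny packing, invert, then argue via a submersion/full-rank extended rigidity matrix that the image of the radius map is open) is essentially \Cref{p:openradii} in disguise; the paper's own proof of \Cref{t:main} instead builds the map $h:Y\to X$ explicitly and bootstraps through $\radii_{a=b=1}$ and \Cref{l:openset}, but your route is a legitimate alternative provided you actually justify the full-rank claim from stress-freeness. Second, your angular parametrization is a genuinely different and arguably more geometric way to set up the cycle obstruction than the paper's ``flatten to standard form'' strategy (\Cref{l:correctradii}); if you can supply the nontriviality argument, this could be a cleaner presentation, but as written the hardest lemma of the paper is missing.
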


%

It was asked in \cite{congortheran2019} whether every $(2,3)$-sparse graph -- a graph where every induced subgraph with $k \geq 2$ vertices has at most $2k-3$ edges -- that is planar is the contact graph of a generic radii circle packing.
If true, this would provide a polynomial-time algorithm for determining whether a graph is the contact graph of a generic radii circle packing \cite{JH97},
yet again contrastingly sharply with the spherical case.
Interestingly, it is known that the analogous statement of this conjecture is true for ``most'' choices of centrally symmetric convex body in the plane, even when applied to a larger class of graphs; see \cite{dew21} for more details.

Our approach to proving \Cref{t:main} is essentially a simple argument with multiple technical caveats.
We break the proof into three separate parts.

First, we prove that if a generic radii sphere packing has contact graph $G \oplus K_2$ then $G$ is a forest (\Cref{l:main}).
Our technique for doing so is to reduce the sphere packing to an embedding of the graph $G$ in $\mathbb{R}^2$ and then prove that any cycle in $G$ would imply an algebraic dependence amongst the radii of the original sphere packing.

Next, we use the now-known structure of the contact graph to prove that every generic radii sphere packing with contact graph $G \oplus K_2$ is stress-free (\Cref{l:stressfree}).
To do so we exploit the structure of $G \oplus K_2$ when $G$ is a forest --
namely that $G \oplus K_2$ is 3-degenerate (i.e., every subgraph has minimal degree at most 3) -- to inductively construct our original sphere packing.
We then use a well-known result from rigidity theory (\Cref{l:0ext}) to prove that our inductive construction preserves the stress-free property,
and hence prove that the original sphere packing is stress-free.

Finally,
we show that for every forest $G$ that is also a penny graph,
the graph $G \oplus K_2$ is the contact graph of an open set of sphere packings.
This is done by exploiting a well-known correspondence between sphere packings and penny graphs (\Cref{t:penny}).
It follows from this that there must exist a generic radii sphere packing with contact graph $G \oplus K_2$,
concluding the proof of \Cref{t:main}.

The paper is structured as follows.
In \Cref{sec:prelim} we carefully describe the background knowledge that is required throughout the paper.
In \Cref{sec:algfun} we prove the three technical lemmas that allow us to reduce many of the difficult 3-dimensional problems encountered during the proof of \Cref{t:main} into more manageable 2-dimensional problems.
We bring all of this together in \Cref{sec:main} to prove \Cref{t:main}.
We conclude the paper with a conjecture regarding the complexity of determining when a graph is a contact graph of a sphere packing with generic radii.
Additional background results regarding algebraic functions are contained in \Cref{app:algebraic functions}.

\section{Preliminaries}\label{sec:prelim}

In this section we outline some of the important concepts we use throughout the paper.

\subsection{Sphere packings, circle packings and bar-and-joint frameworks}\label{sec:prelim-sphere}

A \emph{(bar-and-joint) framework in $\mathbb{R}^d$} is a pair $(G,p)$ where $G=(V,E)$ is a (finite simple) graph and $p:V \rightarrow \mathbb{R}^d$ is a map known as a \emph{realisation} of $G$.
An \emph{equilibrium stress} of a framework $(G,p)$ is a map $\sigma : E \rightarrow \mathbb{R}$ such that for each vertex $v \in V$ with neighbourhood $N_G(v)$, the equality $\sum_{w \in N_G(v)} \sigma_{vw}(p_v-p_w) =0$ holds.
A framework is said to be \emph{stress-free} if the only equilibrium stress is the zero map.
The following necessary combinatorial condition for a framework to be stress-free is the $d$-dimensional analogue of the original observation of Maxwell \cite{maxwell};
see \cite[Theorem 2.5.4]{gss} for a detailed proof of the result.

\begin{theorem}\label{t:maxwell}
	Let $(G,p)$ be a stress-free framework in $\mathbb{R}^d$.
	If the graph $G=(V,E)$ has at least $d$ vertices then $|E| \leq d|V| - \binom{d+1}{2}$.
\end{theorem}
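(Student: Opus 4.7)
My plan is to work with the rigidity matrix $R = R(G,p)$ of the framework: this is the $|E| \times d|V|$ matrix whose row for edge $uv$ places $p_u - p_v$ in the $d$ coordinates corresponding to vertex $u$, $p_v - p_u$ in the coordinates corresponding to vertex $v$, and zeros elsewhere. Unpacking the definition, an equilibrium stress is precisely a vector in the left kernel of $R$, so stress-freeness of $(G,p)$ is equivalent to the rows of $R$ being linearly independent, that is, $\operatorname{rank}(R) = |E|$. By rank-nullity, the task reduces to showing that the right kernel of $R$ has dimension at least $\binom{d+1}{2}$.

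The right kernel automatically contains every infinitesimal isometry of $\mathbb{R}^d$ restricted to $p$, i.e., every map of the form $q_v = A p_v + b$ with $A \in \mathbb{R}^{d \times d}$ skew-symmetric and $b \in \mathbb{R}^d$: such a $q$ satisfies $(p_u - p_v) \cdot (q_u - q_v) = (p_u - p_v)^\top A (p_u - p_v) = 0$ for every edge. The parameter space of such $(A,b)$ has dimension $\binom{d}{2} + d = \binom{d+1}{2}$, so in the \textbf{non-degenerate case} where $\{p_v : v \in V\}$ affinely spans $\mathbb{R}^d$ I would argue that the evaluation map $(A,b) \mapsto (Ap_v + b)_{v \in V}$ is injective: any $(A,b)$ in its kernel has $A$ vanishing on an affine spanning set, forcing $A=0$ and then $b=0$. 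This produces a $\binom{d+1}{2}$-dimensional subspace of $\ker R$ and the bound follows.

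The main obstacle is the \textbf{degenerate case}, where the affine hull of $\{p_v\}$ has some dimension $k < d$. My strategy here is induction on $d$, with base case $d=1$: a stress-free framework in $\mathbb{R}^1$ can contain no cycle, since iterating the equilibrium condition around an $m$-cycle $v_1 v_2 \cdots v_m v_1$ gives a consistent nonzero stress $\sigma_{v_i v_{i+1}} = \sigma_{v_1 v_2}\prod_{j<i}(p_{v_{j-1}} - p_{v_j})/(p_{v_j} - p_{v_{j+1}})$, so $G$ is a forest and $|E| \leq |V| - 1$. For the inductive step, after applying an ambient isometry we may assume every $p_v$ lies in $\mathbb{R}^k \times \{0\}^{d-k}$; the last $d-k$ coordinate columns of $R$ then vanish identically, so $\operatorname{rank}(R)$ equals the rank of the $k$-dimensional rigidity matrix of the restricted framework $(G, p^H)$. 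Stress-freeness of $(G,p)$ therefore transfers to $(G, p^H)$ in $\mathbb{R}^k$, and the inductive hypothesis (applicable because $|V| \geq d \geq k \geq 1$, with the $k=0$ case being trivial) yields $|E| \leq k|V| - \binom{k+1}{2}$. A short arithmetic check shows $k|V| - \binom{k+1}{2} \leq d|V| - \binom{d+1}{2}$ whenever $|V| \geq (d+k+1)/2$, which holds here since $|V| \geq d \geq k+1$, completing the induction.
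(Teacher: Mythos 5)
Your proof is correct and follows the standard argument that the cited reference (Graver--Servatius--Servatius, Theorem 2.5.4) uses: identify equilibrium stresses with the left kernel of the rigidity matrix, so stress-freeness means $\operatorname{rank} R = |E|$, and then bound $\dim\ker R$ from below by the dimension of the space of infinitesimal rigid motions, which is $\binom{d+1}{2}$ when the configuration affinely spans $\mathbb{R}^d$. Your treatment of the degenerate case by induction on $d$ is a valid (if slightly roundabout) way to handle it; one can equivalently argue directly that when the affine hull has dimension $k<d$ the last $d-k$ coordinate blocks of columns of $R$ vanish, contributing $(d-k)|V|$ to the kernel in addition to the $\binom{k+1}{2}$ trivial motions of the $k$-flat, and the same arithmetic $|V|\geq(d+k+1)/2$ finishes. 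Two tiny remarks: the non-degenerate argument already covers $d=1$ (translations give one kernel vector, which is all that $\binom{2}{2}=1$ requires), so the separate cycle argument for the base case is unnecessary; and the cycle argument implicitly uses that adjacent vertices have distinct positions, which holds because an edge with coincident endpoints carries a free stress and so immediately contradicts stress-freeness. Neither affects correctness.
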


Fix $\mathbb{S}^{d-1}$ to be the unit sphere of $\mathbb{R}^d$,
i.e., the set of all points $x = (x_1,\ldots,x_d) \in \mathbb{R}^d$ where $\|x\| := \sqrt{\sum_{i=1}^d x_i^2} =1$.
A \emph{$d$-dimensional sphere packing} is a triple $P=(G,p,r)$ where $G=(V,E)$ is a (finite simple) graph,
and $p: V \rightarrow \mathbb{R}^d$ and $r:V \rightarrow \mathbb{R}_{>0}$ are maps such that the inequality $\|p_v-p_w\| \geq r_v +r_w$ holds for every distinct pair of vertices $v,w \in V$, with equality if and only if $vw \in E$.
If the set $\{r_v :v \in V\}$ is algebraically independent then $P$ is said to have \emph{generic radii}.
The framework $(G,p)$ is called the \emph{associated framework} of the $d$-dimensional sphere packing $P$.
A $d$-dimensional sphere sphere packing is then said to be \emph{stress-free} if its associated framework is stress-free.
We simplify the terminology of ``$d$-dimensional sphere packing'' to \emph{sphere packing} if $d=3$ and \emph{circle packing} if $d=2$.

Given a graph $G=(V,E)$ and a positive integer $d$,
we define $\radii_d(G)$ (also denoted $\radii(G)$ when the value of $d$ is clear from context) to be the set of all vectors $r \in \mathbb{R}^V_{>0}$ for which there exists some $d$-dimensional sphere packing $P=(G,p,r)$.
As $\radii_d(G)$ is a semi-algebraic set defined over $\mathbb{Q}$,
it has an open interior if and only if it contains a generic point (i.e., a vector with algebraically independent coordinates).
These equivalent properties are also linked to the existence of stress-free packings by the following result,
which can be proven by trivial adjustments to the proof of \cite[Proposition 5.3]{congortheran2019}.

\begin{proposition}\label{p:openradii}
	Suppose that there exists a stress-free $d$-dimensional sphere packing with contact graph $G$.
	Then the set $\radii_d(G)$ has non-empty interior and there exists a generic radii $d$-dimensional sphere packing with contact graph $G$.
\end{proposition}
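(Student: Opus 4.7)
The plan is to combine the implicit function theorem with a standard genericity extraction argument. First I would define the smooth edge-length map $F:\mathbb{R}^{d|V|}\times\mathbb{R}^V_{>0}\to \mathbb{R}^E$ by
\[
F(q,s)_{vw} \;=\; \|q_v-q_w\|^2 - (s_v+s_w)^2,
\]
so that $F(p,r)=0$ encodes the contact equations for the given packing. The Jacobian $D_qF$ evaluated at $(p,r)$ is (up to a factor of $2$) the usual rigidity matrix of the framework $(G,p)$, and stress-freeness says precisely that its left kernel is trivial, i.e.\ that it has full row rank $|E|$. In particular the full Jacobian $DF$ is already surjective at $(p,r)$.

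Next I would invoke the submersion theorem: $F^{-1}(0)$ is locally a smooth submanifold near $(p,r)$, and since $D_qF$ alone already surjects onto $\mathbb{R}^E$, the projection $(q,s)\mapsto s$ restricted to $F^{-1}(0)$ is a submersion at $(p,r)$. Hence there exist an open neighborhood $U$ of $r$ in $\mathbb{R}^V_{>0}$ and a smooth map $q:U\to\mathbb{R}^{d|V|}$ with $q(r)=p$ and $F(q(s),s)=0$ for all $s\in U$. For every non-edge $vw\notin E$ with $v\neq w$, the strict inequality $\|p_v-p_w\|>r_v+r_w$ holds by the packing definition, so by continuity the same strict inequality holds at $(q(s),s)$ after shrinking $U$. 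Thus $(G,q(s),s)$ is a genuine sphere packing with contact graph exactly $G$ for every $s\in U$, showing $U\subseteq\radii_d(G)$ and hence that $\radii_d(G)$ has non-empty interior.

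Finally, because $\radii_d(G)$ is cut out from $\mathbb{R}^V_{>0}$ by polynomial equalities and inequalities with rational coefficients, it is semi-algebraic over $\mathbb{Q}$. Any non-empty Euclidean open subset of $\mathbb{R}^V$ contains a point with algebraically independent coordinates over $\mathbb{Q}$, since the complement of the generic locus is a countable union of proper algebraic subvarieties and hence has empty interior. Choosing any such point in $U$ produces the desired generic radii sphere packing with contact graph $G$. The entire argument is routine, with the only delicate point being the need to shrink $U$ so that the strict non-edge inequalities are preserved; this is exactly the reason sphere packings are defined with strict inequalities on non-edges.
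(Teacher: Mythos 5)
Your argument is correct and is essentially the same as the one the paper relies on: the paper proves this proposition by citing \cite[Proposition 5.3]{congortheran2019}, whose proof is exactly this implicit-function-theorem/submersion argument (stress-freeness gives full row rank of the rigidity matrix, hence a local smooth section $s\mapsto q(s)$ over an open set of radii, with non-edge strict inequalities preserved by shrinking), followed by picking a generic point in the resulting open set. No gaps; the only point worth noting is that you should also record that the solved-for configurations keep $s\in\mathbb{R}^V_{>0}$, which is automatic after shrinking the neighbourhood of $r$.
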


Every 1-dimensional sphere packing is stress-free;
this follows from the observation that their contact graphs are always linear forest.
This is not true in general for higher dimensional sphere packings;
for example, if $P = (G,p,r)$ is a circle packing and $G=(V,E)$ is a triangulation (i.e., $|E|=3|V|-6$) with $|V| \geq 4$,
then $P$ is not stress-free by \Cref{t:maxwell}.
Connelly, Gortler and Theran proved that every generic radii circle packing is stress-free,
the 2-dimensional analogue of \Cref{conj:main} and the converse of \Cref{p:openradii}.

\begin{theorem}[\cite{congortheran2019}]\label{t:genconj2}
	Let $G=(V,E)$ be the contact graph of a circle packing $P$ with generic radii.
	Then $P$ is stress-free,
	and hence $|E| \leq 2|V|-3$ if $|V| \geq 2$.
\end{theorem}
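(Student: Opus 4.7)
The plan is to combine a dimension-count coming from the genericity of the radii with the Cauchy--Alexandrov stress lemma applied to the natural planar embedding of the packing.

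I would begin by parametrising the set of triples $(p,r)$ realising $G$ as a circle packing by the semi-algebraic set $X_G$ cut out in $(\mathbb{R}^2)^V\times\mathbb{R}^V_{>0}$ by the edge equations $F(p,r)_{vw}=\|p_v-p_w\|^2-(r_v+r_w)^2=0$ together with strict non-edge inequalities. Genericity of $r$ means $r$ lies in the interior of $\radii_2(G)$, so the projection $\pi:X_G\to\mathbb{R}^V$ is locally surjective near $r$. Writing the Jacobian of $F$ as $[J_p\mid J_r]$, with $J_p$ the usual rigidity matrix and $(J_r)_{vw,u}=-2(r_v+r_w)\mathbf{1}[u\in\{v,w\}]$, this local surjectivity translates on the smooth locus into the rank identity $\mathrm{rank}(J)=\mathrm{rank}(J_p)$, equivalently $\mathrm{image}(J_r)\subseteq\mathrm{image}(J_p)$. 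Dualising, every equilibrium stress $\sigma$ --- a left null vector of $J_p$ --- automatically satisfies the extra radial equilibrium $\sum_{w\sim u}\sigma_{uw}(r_u+r_w)=0$ at each vertex $u$.

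Next, I would exploit that a circle packing realises $G$ as a planar graph, where the cyclic order of the neighbours around each vertex $u$ is determined by the angular position of $p_w-p_u$ on the circle at $u$. Combined with the geometric equilibrium $\sum_{w\sim u}\sigma_{uw}(p_u-p_w)=0$, the Cauchy--Alexandrov stress lemma asserts that at every vertex $u$ where some $\sigma_{uw}$ is non-zero the sign of $\sigma_{uw}$ must change at least four times as $w$ varies over $N_G(u)$ in cyclic order. A standard Euler-formula face count on the planar embedding of $G$ --- with the auxiliary radial constraint used to rule out the degenerate sign patterns at low-degree vertices --- then forces $\sigma\equiv 0$, so that $(G,p)$ is stress-free. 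The Maxwell bound \Cref{t:maxwell} with $d=2$ then yields $|E|\leq 2|V|-3$ when $|V|\geq 2$.

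The main obstacle I expect is justifying the smoothness of $X_G$ at $(p,r)$, which is implicit in the translation from local surjectivity of $\pi$ to the rank identity on Jacobians: if $X_G$ were singular at $(p,r)$ then the tangent cone could strictly exceed $\ker[J_p\mid J_r]$ and the clean linear-algebraic step would fail. Overcoming this requires the semi-algebraic statement that, near any interior point of $\radii_2(G)$, the set $X_G$ is a manifold of the expected dimension $3|V|-|E|$; this is precisely the smoothness/expected-dimension property the introduction identifies as the crux of the Connelly--Gortler--Theran argument, and the reason the analogous $3$-dimensional argument fails in general.
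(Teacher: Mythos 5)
The paper itself does not prove \Cref{t:genconj2}; it quotes the theorem from Connelly, Gortler and Theran \cite{congortheran2019}, and the introduction merely records that their argument uses the Cauchy--Alexandrov stress lemma and rests on the space of circle packings realising $G$ being a smooth semi-algebraic set of dimension $3|V|-|E|$. With that caveat, your sketch does reconstruct the Connelly--Gortler--Theran argument in outline and correctly isolates the two ingredients: genericity of $r$ forces a submersion condition that dualises to the radial equilibrium $\sum_{w\sim u}\sigma_{uw}(r_u+r_w)=0$, and a Cauchy--Alexandrov sign-change bound combined with an Euler-formula count then kills $\sigma$.

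Two points need fixing, though. First, you flag the smoothness of $X_G$ as the unproven crux, as if it were an independent hurdle on which the dualisation step depends. In fact smoothness \emph{is} the Cauchy--Alexandrov step, run once without any genericity: one shows for \emph{every} circle packing realising $G$ that the only $\sigma$ in the left kernel of the full Jacobian $[J_p\mid J_r]$ --- i.e.\ satisfying both $\sum_{w\sim u}\sigma_{uw}(p_u-p_w)=0$ and $\sum_{w\sim u}\sigma_{uw}(r_u+r_w)=0$ --- is $\sigma=0$, whence $\rank[J_p\mid J_r]=|E|$ and $X_G$ is smooth of the expected dimension everywhere. Once you run it in that order, your dualisation becomes a corollary of smoothness plus genericity, and the final Cauchy--Alexandrov invocation is the same lemma you already needed; as written, with smoothness left open and Cauchy--Alexandrov deferred, the chain has a hidden dependency loop. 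Second, your sign-change count is off. You attribute ``at least four sign changes'' at each active vertex to the geometric equilibrium alone and relegate the radial condition to degeneracies at low-degree vertices. The two-dimensional positional equilibrium alone gives no such bound --- a wheel-shaped circle packing carries a genuine positional stress with zero sign changes at the hub, where every spoke stress has the same sign --- so an index argument using only the positional equilibrium cannot conclude $\sigma\equiv 0$. The four-change bound comes from lifting each contact direction to the ray $(p_w-p_u,\,r_u+r_w)$ on the cone $z=\|(x,y)\|$ and applying Cauchy--Alexandrov to the three-dimensional relation $\sum_w\sigma_{uw}(p_w-p_u,\,r_u+r_w)=0$, which is exactly the conjunction of the positional and radial equilibria. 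The radial condition is therefore essential at every vertex, not an auxiliary device for low degrees.
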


The family of sphere packings we are most interested in are those with contact graph $G \oplus K_2$.
This is, in part, because of the following observation of Kirkpatrick and Rote;
see \cite[Proposition 4.5]{HK01} for a sketch of their proof.

\begin{theorem}\label{t:penny}
	A graph $G \oplus K_2$ is the contact graph of a sphere packing if and only if $G$ is a penny graph.
\end{theorem}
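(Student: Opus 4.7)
The proof is a direct application of inversive geometry, split into two implications.

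For the forward direction, suppose $G$ is a penny graph with a realization by unit circles in $\mathbb{R}^2$. I would lift these to unit spheres with centers in the plane $z=0$ and observe that the horizontal planes $\pi_1 : z=1$ and $\pi_2 : z=-1$ are tangent to every such sphere. This produces a generalized ``packing'' consisting of two parallel planes together with unit spheres in the slab between them, with contact pattern $G$ among the unit spheres and each unit sphere touching both planes. To convert this into a genuine sphere packing, I would pick a point $c$ in the slab $\{-1 < z < 1\}$ that is outside every unit sphere (such a $c$ exists since the spheres occupy a bounded region) and apply inversion centred at $c$. Neither plane passes through $c$, so both map to spheres through $c$; since the planes are parallel, the two image spheres share a tangent plane at $c$ and are therefore tangent to each other there. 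Each unit sphere maps to a sphere tangent to both of these new spheres, and pairwise tangencies among the unit spheres are preserved. Because $c$ lies on the ``spheres side'' of each plane and outside every unit sphere, the relevant interior-exterior relations survive inversion, so the images form a genuine sphere packing with contact graph exactly $G \oplus K_2$.

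For the backward direction, suppose $P$ is a sphere packing with contact graph $G \oplus K_2$, let $B_1, B_2$ denote the two spheres corresponding to the $K_2$ with tangent point $t$, and let $\{S_v : v \in V(G)\}$ denote the remaining spheres. A short argument shows that no $S_v$ passes through $t$: if $t \in \partial S_v$, then by the classification of sphere-sphere intersections in $\mathbb{R}^3$ the point $t$ would have to be the unique tangent point of $S_v$ and $B_1$, but then $S_v$ and $B_2$ would lie on the same side of their common tangent plane at $t$ and their interiors would overlap a neighbourhood of $t$, contradicting the packing condition. I would then invert at $t$: both $B_1$ and $B_2$ pass through the centre of inversion and hence map to planes, and since they were tangent at $t$ these planes are parallel. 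Each $S_v$ maps to a sphere $S'_v$ tangent to both image planes, so all the $S'_v$ share the same radius (half the distance between the planes) and their centres lie in the midplane $\pi_m$. Tangencies among the $S_v$ are preserved by inversion, so orthogonally projecting the $S'_v$ to $\pi_m$ yields a packing of equal-radius circles realising $G$, exhibiting $G$ as a penny graph.

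The main technical obstacle is not the preservation of contact relations, which is automatic for inversion, but ensuring that the image configurations are genuine packings with disjoint interiors. In the forward direction this requires a careful choice of the inversion centre so that $c$ lies in the exterior of every original object (with the appropriate halfspaces chosen as the ``interiors'' of the planes); in the backward direction it requires the verification, sketched above, that the singular point of the inversion does not lie on any $S_v$. Once these caveats are handled, the bijective and tangency-preserving nature of inversion makes the rest of the argument mechanical.
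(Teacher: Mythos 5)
Your argument is correct: the inversion-at-the-tangent-point argument (and its reverse, inverting a lifted penny configuration at a point of the slab) is exactly the Kirkpatrick--Rote proof that the paper does not reproduce but cites via \cite[Proposition 4.5]{HK01}. The two caveats you isolate --- that the centre of inversion must avoid every sphere, and that the singular point $t$ cannot lie on any $S_v$ --- are indeed the only non-mechanical steps, and you handle both correctly.
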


It is known that a penny graph with $n$ vertices has at most $\lfloor 3n - \sqrt{12n-3} \rfloor$ edges,
and this bound is tight \cite{Har}.
With this we obtain a tight upper bound for the number of contacts for any sphere packing with contact graph $G \oplus K_2$.

\begin{corollary}\label{c:pennysphere}
	Let $P$ be a sphere packing with contact graph $G \oplus K_2$ for some graph $G=(V,E)$.
	Then $P$ has at most $\lfloor 5|V| +1 - \sqrt{12|V|-3} \rfloor$ contacts,
	and this bound is tight.
\end{corollary}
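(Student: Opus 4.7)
The plan is to reduce the corollary directly to Harborth's theorem for penny graphs via the edge-counting identity for a graph join, together with \Cref{t:penny}.

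First, I would apply \Cref{t:penny}: since $P$ is a sphere packing with contact graph $G \oplus K_2$, the graph $G$ must be a penny graph. Next I would count the edges of $G \oplus K_2$: the edge set decomposes as the edges within $G$, the single edge of $K_2$, and the complete bipartite part joining the two vertex classes, giving
\[
|E(G \oplus K_2)| \;=\; |E| + 1 + 2|V|.
\]
Applying the Harborth bound $|E| \leq \lfloor 3|V| - \sqrt{12|V|-3}\rfloor$ to the penny graph $G$, and noting that adding the integer $2|V|+1$ commutes with the floor, I obtain
\[
|E(G \oplus K_2)| \;\leq\; \lfloor 5|V| + 1 - \sqrt{12|V|-3}\rfloor,
\]
which is exactly the number of contacts of $P$.

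For tightness, I would invoke the fact that the Harborth bound is attained: for each $n$ (or at least an infinite family of $n$) there exists a penny graph $G_0$ on $n$ vertices with exactly $\lfloor 3n - \sqrt{12n-3}\rfloor$ edges. By the ``if'' direction of \Cref{t:penny}, the graph $G_0 \oplus K_2$ is then realised as the contact graph of some sphere packing, and by the edge-count above this packing attains the stated bound.

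I do not anticipate any real obstacle here: both ingredients (Harborth's extremal result and \Cref{t:penny}) are quoted in the paper, so the only work is the arithmetic of the edge count and a sentence explaining why the floor function passes through an integer shift. The only subtlety worth flagging is to check that Harborth's tight construction is available for the value of $|V|$ used in the statement, which is why I would phrase tightness as ``there exist arbitrarily large graphs $G$ for which equality holds'' rather than claiming it for every $|V|$.
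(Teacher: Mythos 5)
Your proof is correct and is exactly the argument the paper intends (the corollary is stated as an immediate consequence of \Cref{t:penny} together with Harborth's bound, with no further proof given). Your cautious hedge about tightness is unnecessary: Harborth's result gives the exact maximum for every $n$, so the extremal penny graph exists for every value of $|V|$.
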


As alluded to in the introduction, \Cref{c:pennysphere} informs us that we cannot directly use the same methods as Connelly, Gortler and Theran \cite{congortheran2019} when proving \Cref{t:main}.
To utilise their method for a given sphere packing contact graph $G$ with $n$ vertices and $m$ edges,
it is required that the set of sphere packings with contact graph $G$ is a smooth $(4n-m)$-dimensional manifold.
However \Cref{c:pennysphere} informs us that for each $n \in \mathbb{N}$ there exists a sphere packing contact graph with $n$ vertices and 
\begin{equation*}
	m = \left\lfloor 5(n-2) +1 - \sqrt{12(n-2)-3} \right\rfloor = \left\lfloor 5n - 9 - \sqrt{12n-27} \right\rfloor
\end{equation*}
edges.
The set of sphere packings with such a contact graph cannot be a $(4n-m)$-dimensional manifold when $n$ is large, since $4n-m = \lfloor -n + 9 + \sqrt{12n-27} \rfloor < 0$.

\subsection{3-dimensional M\"{o}bius transforms}

A \emph{(3-dimensional) M\"{o}bius transform} is a rational map $\phi: \mathbb{R}^3 \dashrightarrow \mathbb{R}^3$ where there exists points $a,b \in \mathbb{R}^3$, scalars $\lambda \neq 0$, $\tau \in \{0,2\}$, and linear isometry $A: \mathbb{R}^3 \rightarrow \mathbb{R}^3$ such that for each $u \in \mathbb{R}^3 \setminus \{b\}$ we have 
\begin{equation*}
	\phi(u) := a + \frac{\lambda A(u-b)}{\|u-b\|^\tau} .
\end{equation*}
These transforms are of interest since one of three things can happen when one is applied to a sphere $S$.
\begin{enumerate}
	\item Either the point $b$ is not contained in the convex hull of $S$, or $\tau = 0$: in this case $\phi(S)$ is a sphere and any point that was inside/outside the convex hull of $S$ remains so.
	\item The point $b$ is contained in the interior of the convex hull of $S$ and $\tau = 2$: in this case $\phi(S)$ is a sphere and any point that was inside (respectively, outside) the convex hull of $S$ is now outside (respectively, inside) the convex hull of $S$.
	\item The point $b$ is contained in $S$ and $\tau = 2$: in this case $\phi(S) = H\setminus\{b\}$, where $H$ is an affine hyperplane parallel to the tangent plane of $S$ at $b$.
\end{enumerate}
We define two sphere packings $P$ and $P'$ of $n$ spheres to be \emph{M\"{o}bius-equivalent} if there exists a M\"{o}bius transform that maps $P$ to $P'$
Note that if $n \geq 2$,
any such M\"{o}bius transform must satisfy condition (i) for every sphere in $P$ so that every element of $P'$ is a sphere and no sphere of $P'$ contains any other sphere of $P'$ within its interior.
Since M\"{o}bius transforms are injective, any pair of M\"{o}bius-equivalent sphere packings must have identical contact graphs.

\subsection{Algebraic functions}

Let $D \subset \mathbb{R}^n$ and $D' \subset \mathbb{R}$ be open subsets and let $\mathbb{F}$ be a subfield of $\mathbb{R}$.
A \emph{(real) algebraic function over $\mathbb{F}$} (from $D$ to $D'$) is a continuous map $f:D \rightarrow D'$ where there exists a polynomial $p \in \mathbb{F}[Y,X_1,\ldots,X_n]$ such that $p \neq 0$ and $p(f(x),x_1,\ldots,x_n) = 0$ for every $x = (x_1,\ldots,x_n) \in D$.
Equivalently,
$f$ is algebraic if and only if there exists $n$-variable polynomials $p_0,\ldots,p_m$ over $\mathbb{F}$ such that $p_k \neq 0$ for each $k\in \{0,\ldots,m\}$ and $\sum_{k=0}^m p_k(x) f(x)^k = 0$ for every $x \in D$.
Supposing that $0 \in D'$,
the zero set $Z(f) \subset D$ of a non-constant algebraic function $f$ is always contained within a proper Zariski closed subset of $\mathbb{R}^n$ intersected with $D$;
this follows from two observations:
(i) if $Z(p) \subset \mathbb{R}^n$ is the zero set of $p$ (where $p(f(x),x)=0$ for each $x \in D$) then $\{0\} \times Z(f) \subset Z(p) \cap (\{0\} \times D)$,
and (ii) $Z(p) \cap (\{0\} \times D) = \{0\} \times D$ if and only if $Z(p) \supset \{0\} \times \mathbb{R}^n$
which can only occur if $f$ is constant.
Unless stated otherwise, we assume that $D'= \mathbb{R}$.

A more detailed analysis of algebraic functions is given in \Cref{app:algebraic functions}.
The main non-standard result regarding algebraic functions that we require is the following.

\begin{lemma}\label{l:implicit}
	Let $\mathbb{F} \subset \mathbb{R}$ be a subfield, let $D \subset \mathbb{R}^n$ be an open set and let $x,y : D \rightarrow \mathbb{R}$ be continuous functions.
	Suppose there exist algebraic functions $p,q,\alpha,\beta \in \mathbb{A}_D[X_{n_1},\ldots,X_{n_k};\mathbb{F}]$ where
	\begin{align}
		\label{line1} x(t)^2 + y(t)^2 &= p(t),\\
		\label{line2} (x(t)-\alpha(t))^2 + (y(t)-\beta(t))^2 &= q(t)
	\end{align}
	for all $t \in D$.
	If at least one of $\alpha,\beta$ is not the zero map then $x,y \in \mathbb{A}_D[X_{n_1},\ldots,X_{n_k};\mathbb{F}]$.
\end{lemma}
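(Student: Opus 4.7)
The plan is to combine the two hypotheses linearly to remove the $x^2+y^2$ terms, derive for each of $x$ and $y$ a non-trivial quadratic polynomial identity whose coefficients are algebraic functions of $t$, and then invoke the closure property ``a root of a polynomial whose coefficients are algebraic is itself algebraic'' that I would expect to be supplied by \Cref{app:algebraic functions}.

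Subtracting \eqref{line2} from \eqref{line1} after expanding the squares immediately yields the linear identity
\begin{equation*}
2\alpha(t)\,x(t) + 2\beta(t)\,y(t) \;=\; p(t)-q(t)+\alpha(t)^2+\beta(t)^2 \;=:\; M(t),
\end{equation*}
where $M \in \mathbb{A}_D[X_{n_1},\ldots,X_{n_k};\mathbb{F}]$ since polynomial combinations of algebraic functions remain algebraic. Without loss of generality assume $\alpha \not\equiv 0$ (if instead only $\beta \not\equiv 0$, swap the roles of $x$ and $y$ throughout). Squaring $2\alpha x = M - 2\beta y$ and substituting $4\alpha^2 x^2 = 4\alpha^2(p - y^2)$ from \eqref{line1} gives, after rearrangement, the identity
\begin{equation*}
4(\alpha^2+\beta^2)(t)\, y(t)^2 \;-\; 4 M(t)\beta(t)\, y(t) \;+\; M(t)^2 - 4\alpha(t)^2 p(t) \;=\; 0,
\end{equation*}
which is valid on all of $D$ with no division by $\alpha$ required. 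The symmetric manipulation starting from $2\beta y = M - 2\alpha x$ yields the corresponding identity
\begin{equation*}
4(\alpha^2+\beta^2)(t)\, x(t)^2 \;-\; 4 M(t)\alpha(t)\, x(t) \;+\; M(t)^2 - 4\beta(t)^2 p(t) \;=\; 0.
\end{equation*}
Since $\alpha \not\equiv 0$, the common leading coefficient $4(\alpha^2+\beta^2)$ is not the zero algebraic function on $D$, so each of $x$ and $y$ satisfies a non-trivial quadratic polynomial relation whose coefficients lie in $\mathbb{A}_D[X_{n_1},\ldots,X_{n_k};\mathbb{F}]$. The algebraic-closure result in \Cref{app:algebraic functions} then places both $x$ and $y$ in $\mathbb{A}_D[X_{n_1},\ldots,X_{n_k};\mathbb{F}]$, which is the conclusion.

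The main obstacle is this last closure step: one needs that whenever a continuous $f:D\to\mathbb{R}$ satisfies $\sum_{i=0}^{d} A_i(t)\,f(t)^i = 0$ on $D$ with $A_0,\ldots,A_d \in \mathbb{A}_D[X_{n_1},\ldots,X_{n_k};\mathbb{F}]$ and $A_d \not\equiv 0$, then $f$ itself belongs to $\mathbb{A}_D[X_{n_1},\ldots,X_{n_k};\mathbb{F}]$. The expected route is an iterated resultant computation that eliminates each $A_i$ against its own defining polynomial, producing a single non-zero polynomial in $\mathbb{F}[Y,X_{n_1},\ldots,X_{n_k}]$ that vanishes on $(f(t),t)$; the hypothesis $A_d \not\equiv 0$ is exactly what prevents the successive resultants from collapsing to the zero polynomial, and in our setting this corresponds to the non-vanishing of $4(\alpha^2+\beta^2)$. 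Keeping track of the variables involved, so that no $X_j$ with $j \notin \{n_1,\ldots,n_k\}$ is accidentally reintroduced during elimination, is the principal bookkeeping task, and is precisely the sort of general statement one expects to find packaged cleanly in the appendix rather than reproved inside this lemma.
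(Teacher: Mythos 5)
Your derivation of the quadratic $4(\alpha^2+\beta^2)\,y^2 - 4M\beta\,y + M^2 - 4\alpha^2 p = 0$ (and the symmetric one for $x$) is exactly the paper's equation \eqref{line3} after expansion, with your $M$ being the same shorthand $p-q+\alpha^2+\beta^2$; the paper reaches it by rearranging \eqref{line2} and squaring rather than by isolating the linear combination first, but the two are algebraically identical. Up to this point you are correct and on the paper's route.

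The gap is in the final step. You claim that a closure lemma of the form ``if a continuous $f$ satisfies $\sum A_i f^i = 0$ with $A_i \in \mathbb{A}_D[X_{n_1},\ldots,X_{n_k};\mathbb{F}]$ and $A_d \not\equiv 0$, then $f \in \mathbb{A}_D[X_{n_1},\ldots,X_{n_k};\mathbb{F}]$'' should be sitting in \Cref{app:algebraic functions}. It is not, and the resultant argument you sketch would not deliver it even if it were: eliminating the $A_i$ produces a non-zero polynomial $p \in \mathbb{F}[Y,X_{n_1},\ldots,X_{n_k}]$ with $p(f(t),t_{n_1},\ldots,t_{n_k})=0$, but membership in $\mathbb{A}_D[X_{n_1},\ldots,X_{n_k};\mathbb{F}]$ is, by the paper's definition, the stronger assertion that $f$ is \emph{invariant} under changing the coordinates outside $\{n_1,\ldots,n_k\}$ — a polynomial relation in the restricted variables alone does not rule out $f$ jumping between roots as the other coordinates move. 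What \Cref{l:algcoeff} actually supplies is only that $f$ is algebraic over $\mathbb{F}$ (a relation involving all $n$ variables). The paper then closes the remaining distance with a separate, geometric argument: for a dense set of restricted-coordinate values the simultaneous system \eqref{line1}--\eqref{line2} (two circle equations with a common nonconstant offset) has finitely many solutions $(x,y)$, and continuity of $x,y$ then forces them to be locally constant along the slices where only the unrestricted coordinates vary. That finiteness-plus-continuity step is the missing ingredient in your proposal, and it cannot be replaced by the resultant bookkeeping you describe.
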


\begin{proof}
	By rearranging \cref{line2} and squaring both sides we see that
	\begin{equation*}
		4 \alpha(t)^2 x(t)^2 = \left( x(t)^2 + \alpha(t)^2 + (y(t)-\beta(t))^2 - q(t)   \right)^2.
	\end{equation*}
	By combining the above equation and \cref{line1} we see that the following equation holds for all $t \in D$:
	\begin{align}\label{line3} 
	\left( 2\beta(t)y(t) - \left( p(t) - q(t) + \alpha(t)^2 + \beta(t)^2\right) \right)^2 +  4\alpha(t)^2y(t)^2 - 4 \alpha(t)^2p(t) = 0.
	\end{align}
	When \cref{line3} is expanded out we obtain an equation $a_0(t) + a_1(t)y(t) + a_2(t)y(t)^2 = 0$ for some algebraic functions $a_0,a_1,a_2:D \rightarrow \mathbb{R}$ over $\mathbb{F}$ with $a_2(t) = 4(\alpha(t)^2 + \beta(t)^2)$.
	Since at least one of $\alpha,\beta$ is not the zero map,
	$a_2$ is not the zero map.
	Hence $y$ is algebraic over $\mathbb{F}$ (\Cref{l:algcoeff}).
	Since the sum of any two algebraic functions over $\mathbb{F}$ is an algebraic function over $\mathbb{F}$,
	it now follows from \cref{line1} that $x$ is also algebraic over $\mathbb{F}$.
	As at least one of $\alpha,\beta$ is not the zero map,
	there exists a dense set of variables $s \in D$ where \cref{line1,line2} have a finite set of solutions over the set $\{t \in D : t_{n_i} = s_{n_i} \text{ for each $i \in \{1,\ldots,k\}$} \}$.
	Since both $x$ and $y$ are continuous, it now follows that $x,y \in \mathbb{A}_D[X_{n_1},\ldots,X_{n_k};\mathbb{F}]$.	
\end{proof}

\section{Flattening sphere packings}\label{sec:algfun}

In this section we provide multiple important technical lemmas that allow us to consider our difficult 3-dimensional problem as a simpler 2-dimensional problem.
This will be split into two parts:
(i) obtaining a standard form for our sphere packings via M\"{o}bius transforms that, for the most part, preserves generic radii, and will allow us to reduce the problem from a 3-dimensional one to a 2-dimensional one (\Cref{l:correctradii});
(ii) proving that any results obtained for the standard form can be carried back to the original setting of generic radii (\Cref{l:openset}).

\subsection{Obtaining a standard form}\label{subsec:sf}

Given a sphere packing $P=(G \oplus K_2,p,r)$ ($K_2$ being the complete graph with vertices $\{a,b\}$) with generic radii $r$,
we now show the existence of a M\"{o}bius-equivalent sphere packing $P'=(G \oplus K_2,p',r')$ where $r_a=r_b = 1$ and the restriction of $r'$ to the vertices of $G$ forms an algebraically independent set.
If we move the packing so that $p'=(0,0,-1)$ and $p'_b = (0,0,1)$,
our choice of standard form forces the points $p'_v$ to all lie in the $xy$-plane.
This positioning is desirable for later,
as it effectively reduces the problem to a 2-dimensional one.
\Cref{fig: standard form} depicts the standard form that we desire.

\begin{figure}[ht]
    \centering
    \includegraphics[width = 0.3\textwidth]{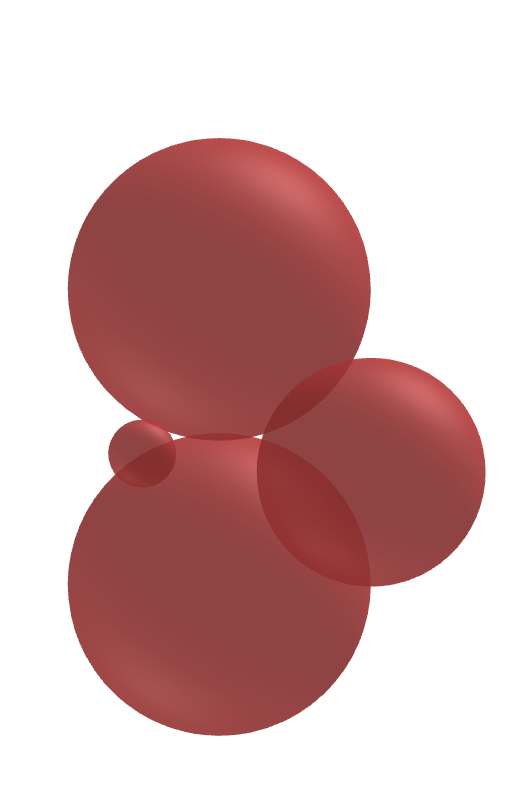}
    \caption{The standard form sphere packing, with two spheres of identical size (the top and bottom spheres) in contact with all other spheres.}
    \label{fig: standard form}
\end{figure}

We begin with the following result.

\begin{lemma}\label{l:thefix}
	Let $G=(V,E)$ be a graph, let $K_2$ be the complete graph with vertices $\{a,b\}$ and let $P = (G \oplus K_2,p,r)$ be a sphere packing with generic radii.
	Then there exists a M\"{o}bius-equivalent sphere packing $P' = (G \oplus K_2,p',r')$ where $r_a' > r_v'$ and $r_b' > r_v'$ for all $v \in V$.
\end{lemma}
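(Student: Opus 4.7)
The plan is to apply a Möbius inversion centered at a point $b_0$ chosen very close to the tangent point of $S_a$ and $S_b$; as $b_0$ approaches this tangent point, the image spheres $S'_a$ and $S'_b$ will acquire arbitrarily large radii, while the remaining image radii stay bounded by a constant depending only on $P$, which is precisely what the statement demands.

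Since $ab$ is an edge of $G \oplus K_2$, the closed balls $\bar{B}_a$ and $\bar{B}_b$ bounded by $S_a$ and $S_b$ meet tangentially at a single point $q$. My first step would be to verify that no other sphere in $P$ passes through $q$. If some $S_v$ with $v \notin \{a,b\}$ contained $q$, then the disjoint-interior condition forces $S_v$ to be tangent to both $S_a$ and $S_b$ at $q$. Tangency with $S_a$ places $p_v$ on the ray from $q$ opposite to $p_a$, while tangency with $S_b$ places $p_v$ on the ray opposite to $p_b$; since $p_a,q,p_b$ are collinear with $q$ strictly between $p_a$ and $p_b$, these two requirements are incompatible. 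Consequently there exists $\delta>0$ such that the open ball of radius $\delta$ about $q$ is disjoint from $\bar{B}_v$ for every $v \notin \{a,b\}$.

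I would then translate and rotate so that $q$ sits at the origin and the line through $p_a,p_b$ lies along the $z$-axis, and set $b_0 = (\epsilon,0,0)$ for some small $\epsilon \in (0,\delta)$. A direct computation gives $\|b_0-p_a\|^2 - r_a^2 = \epsilon^2 > 0$ and symmetrically for $p_b$, so $b_0$ lies strictly outside both $\bar{B}_a$ and $\bar{B}_b$; combined with $\epsilon<\delta$, this places $b_0$ outside every closed ball of $P$. The map $\phi(u) = (u-b_0)/\|u-b_0\|^2$ is a Möbius transform in the paper's sense (take $a=0$, $\lambda=1$, $A=I$, $\tau=2$) satisfying condition (i) for every sphere, so $P':=\phi(P)$ is genuinely Möbius-equivalent to $P$. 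The standard sphere-inversion identity then gives
\[
r'_v \;=\; \frac{r_v}{\bigl|\,\|p_v-b_0\|^2 - r_v^2\,\bigr|}
\]
for each $v$. In particular $r'_a = r_a/\epsilon^2$ and $r'_b = r_b/\epsilon^2$ both tend to infinity as $\epsilon\to 0$, whereas for every other $v$ the denominator $|\|p_v-b_0\|^2 - r_v^2|$ converges to the strictly positive quantity $\|p_v-q\|^2 - r_v^2$ (strictly positive because $q\notin \bar{B}_v$). Since $V$ is finite, the radii $r'_v$ for $v\notin\{a,b\}$ remain uniformly bounded in $\epsilon$, so choosing $\epsilon$ sufficiently small forces $r'_a, r'_b > r'_v$ for every $v\in V$.

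The only genuinely delicate step is the tangency-exclusion argument that isolates $q$ from the other spheres, ruling out configurations where another sphere could pass through $q$ and ruin the inversion; everything else is a routine application of the classical formula for the radius of an inverted sphere. It is worth remarking that the genericity of $r$ is not actually invoked in this reduction—it is carried through the statement only because subsequent lemmas in this subsection will require it when reducing the 3-dimensional problem to a planar one.
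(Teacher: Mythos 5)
Your construction is correct for what the lemma literally states: inverting about a point $b_0$ near the tangent point $q$ of $S_a, S_b$ does make $r'_a, r'_b$ arbitrarily large while the remaining radii stay bounded, and your tangency-exclusion argument ruling out any third sphere through $q$ is sound. The inversion-radius formula and the estimates are all fine, and your argument is indeed more elementary than the paper's.

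However, your closing remark is precisely where the gap lies. The paper's proof of this lemma works considerably harder than yours because it actually establishes, and the next lemma (\Cref{l:correctradii}) genuinely uses, a stronger conclusion that the lemma statement unfortunately omits: the M\"{o}bius-equivalent packing $P'$ can be chosen so that $r'$ is \emph{still} an algebraically independent set. In \Cref{l:correctradii} the author applies \Cref{l:thefix} and then ``supposes WLOG'' that $r_a > r_b > r_v$; that relabelling is only legitimate if the output $P'$ of \Cref{l:thefix} still satisfies the genericity hypothesis of \Cref{l:correctradii}. M\"{o}bius equivalence does not automatically preserve algebraic independence of radii --- the image radii are algebraic functions of the inputs and of the inversion centre, and a fixed choice of $\epsilon$ gives you no control over the resulting algebraic relations. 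The paper instead considers a one-parameter family of inversions $\phi^t$, shows that the Zariski closure of the image radii curve is irreducible, $1$-dimensional, and contains the generic point $r$ in its closure (as $t \to \infty$), and deduces that a dense set of parameter values $t$ yields algebraically independent output radii; $\varepsilon$ is then chosen from this dense set and simultaneously small enough to force the radius ordering. Your single inversion about $(\epsilon,0,0)$ gives the ordering but not the genericity, so it would not support the downstream use of the lemma without supplementing it by an analogous density argument in the parameter $\epsilon$.
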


\begin{proof}
	By switching the vertices $a,b$ if necessary,
	we may suppose without loss of generality that $r_a > r_b$.
	By rotating and translating $P$,
	we may also assume that $p_a = (0,0,-r_a)$ and $p_b = (0,0,r_b)$.
	For each $t \neq 0$,
	define $\phi^t$ to be the M\"{o}bius transform where
	\begin{equation*}
		\phi^t(u) = (t,0,0) + \frac{t^2 (u-(t,0,0))}{\|u-(t,0,0)\|^2} .
	\end{equation*}
	Each M\"{o}bius transform $\phi^t$ fixes the points $(0,0,0)$ and $(t,0,0)$,
	and has singularity $(t,0,0)$.
	Fix $e_x = (1,0,0)$ and choose any vertex $v$ of $G \oplus K_2$ and any non-zero scalar $t$ such that $\|te_x - p_v\| > r_v$.
	The intersection of $S_v$ and the line through $te_x$ are the two points
	\begin{equation*}
		a_v = p_v + \frac{r_v(t e_x - p_v)}{\|te_x - p_v\|} , \qquad b_v = p_v - \frac{r_v(t e_x - p_v)}{\|te_x - p_v\|} .
	\end{equation*}
	From this,
	we compute
	\begin{equation*}
		\phi^t(a_v) = te_x + \frac{t^2(t e_x - p_v)}{\|te_x - p_v\|(r_v - \|te_x - p_v\|)},  \qquad \phi^t(b_v) = te_x - \frac{t^2(t e_x - p_v)}{\|te_x - p_v\|(r_v + \|te_x - p_v\|)} .
	\end{equation*}
	It now follows that the radius of the sphere $\phi^t(S_v)$ is
	\begin{align*}
		\frac{\|\phi^t(a_v) - \phi^t(b_v)\|}{2} =
		\begin{cases}
			\frac{r_v t^2}{\|te_x - p_v\|^2-r_v^2} &\text{if } \|te_x - p_v\| > r_v,\\
			\frac{r_v t^2}{r_v^2 - \|te_x - p_v\|^2} &\text{if } \|te_x - p_v\| < r_v.
		\end{cases}
	\end{align*}
	We observe two facts here:
	(i) there exists $\delta_v>0$ such that if $0 < |t| <\delta_v$ then the first value is taken,
	and (ii) as $t \rightarrow \pm \infty$, the radii converges to $r_v$.
	
	Define the following two rational maps:
	\begin{equation*}
		f_v : \mathbb{R} \dashrightarrow \mathbb{R}, ~ t \mapsto \frac{r_v t^2}{\|te_x - p_v\|^2-r_v^2} , \qquad 
		f : \mathbb{R} \dashrightarrow \mathbb{R}^{V \cup \{a,b\}}, ~ t \mapsto (f_v(t))_{v \in V}.
	\end{equation*}
	By the Tarski-Seidenberg theorem, the image of $f$ (with domain restricted to semi-algebraic set of points where it is defined) is a semi-algebraic set.
	Furthermore, the image of $f$ is 1-dimensional and contains $r$ in its closure.
	Fix $C$ to be the closure of the image of $f$ under the Zariski topology on $\mathbb{R}^{V \cup \{a,b\}}$.
	We observe the following two facts:
	(i) 	$C$ is both irreducible and 1-dimensional (this can be seen by extending the domain and codomain of $f$ to projective spaces);
	(ii) since $C$ contains a point with algebraically independent coordinates (namely $r$),
	it is not contained in any rationally-defined algebraic subset of $\mathbb{R}^{V \cup \{a,b\}}$.
	An immediate consequence of these two facts is that $C$ contains a dense subset of points with have algebraically independent coordinates.
	Hence there exists a dense subset $D \subset \mathbb{R}$ where for each $t \in D$,
	the coordinates of $f(t)$ are algebraically independent.
	
	Fix $\delta := \min\{ \delta_v : v \in V\}$,
	with $\delta_v$ as defined above.
	If $0 < |t| < \delta$ then the radii of $\phi^t(P)$ is exactly $f(t)$.
	For each $t>0$,
	fix $S(t)$ to be the sphere with radius $t$ and centre $(t,0,0)$.
	Then there exists $T>0$ such that for all $0<t<T$,
	the sphere $S(t)$ does not overlap with, nor is contained in, any sphere $S_v$ with $v \in V$.
	Choose $0 < \varepsilon < \min\{\delta,T, r_a,r_b\}$ such that $\varepsilon \in D$.
	It is immediate that the sphere packing $\phi^\varepsilon(P)$ has generic radii $f(\varepsilon)$.
	Every sphere $S_v$ with $v \in V$ is mapped to a sphere contained in $S(\varepsilon)$, and so each sphere $\phi^\varepsilon(S_v)$ has radius less than $\varepsilon$.
	Using our prior formula,
	the spheres $\phi^\varepsilon(S_a)$ and $\phi^\varepsilon(S_b)$ have radii $r_a$ and $r_b$ respectively.
	We conclude the result by setting $P' = \phi^\varepsilon(P)$.
\end{proof}

\begin{lemma}\label{l:correctradii}
	Let $G=(V,E)$ be a graph, let $K_2$ be the complete graph with vertices $\{a,b\}$ and let $P = (G \oplus K_2,p,r)$ be a sphere packing with generic radii.
	Then there exists a M\"{o}bius-equivalent sphere packing $P' = (G \oplus K_2,p',r')$ where the following holds:
	\begin{enumerate}
		\item $p'_v=(x_v,y_v,0)$ for each $v \in V$,
		\item $p'_a = (0,0,-1)$, $p'_b = (0,0,1)$ and $r'_a = r'_b =1$, and
		\item the set $\{r'_v : v \in V \}$ is algebraically independent.
	\end{enumerate}	
\end{lemma}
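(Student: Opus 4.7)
The plan is to build the required Möbius-equivalence explicitly as an inversion-similarity-inversion composition, and then to verify condition (iii) through a transcendence-degree argument. First I apply \Cref{l:thefix}, swapping $a$ and $b$ if necessary, to pass to a Möbius-equivalent packing with $r_a > r_b > r_v$ for every $v \in V$ while preserving algebraic independence of the radii; a subsequent rigid motion lets me assume $p_a = (0,0,-r_a)$ and $p_b = (0,0,r_b)$, so that $S_a$ and $S_b$ are tangent at the origin. The tangency conditions then pin down each $p_v = (x_v,y_v,z_v)$ with
\[
    z_v = \frac{r_v(r_a-r_b)}{r_a+r_b}, \qquad x_v^2 + y_v^2 = \frac{4 r_a r_b r_v (r_v + r_a + r_b)}{(r_a+r_b)^2}.
\]

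Writing $M = r_a + r_b$, I define $\phi = \phi_2 \circ \psi \circ \phi_1$, where $\phi_1(u) = u/\|u\|^2$ and $\phi_2(u) = 2u/\|u\|^2$ are inversions at the origin, and $\psi$ is the Euclidean similarity $u \mapsto \tfrac{4 r_a r_b}{M}\bigl(u - \tfrac{r_a-r_b}{4 r_a r_b} e_z\bigr)$. A direct computation shows that $\phi_1$ sends $S_a$ and $S_b$ to the parallel planes $z = -\tfrac{1}{2r_a}$ and $z = \tfrac{1}{2r_b}$, and sends each $S_v$ to a sphere of common radius $\tfrac{M}{4 r_a r_b}$ centred in the midplane $z = \tfrac{r_a-r_b}{4 r_a r_b}$. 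The similarity $\psi$ repositions the two planes to $z = \pm 1$ and each $S_v$-image to the unit sphere centred at $(x_v/r_v,\, y_v/r_v,\, 0)$; finally $\phi_2$ converts those planes to the required unit spheres at $(0,0,\pm 1)$ and, as the $xy$-plane is fixed setwise by $\phi_2$, sends each intermediate unit sphere to a sphere whose centre still lies in the $xy$-plane. The inversion formulas then give
\[
    r'_v \;=\; \frac{2 M^2\, r_v}{4 r_a r_b M - (r_a-r_b)^2\, r_v}.
\]
For $\phi$ to be a genuine Möbius-equivalence of packings, each inversion must fall into case (i) of the trichotomy from \Cref{sec:prelim} when applied to each $S_v$ (otherwise the swap of interior and exterior would produce a sphere containing other spheres). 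For $\phi_1$ this is immediate from $\|p_v\|^2 = r_v^2 + 4r_a r_b r_v/M > r_v^2$, while for $\phi_2$ a short computation using $r_v < r_b < r_a$ and $(r_a-r_b)^2 < r_a^2 < 4 r_a M$ yields $r_v(r_a-r_b)^2 < 4 r_a r_b M$, which is equivalent to the origin lying strictly outside each intermediate unit sphere. With this, properties (i) and (ii) are both established by the construction.

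For (iii), the displayed formula expresses $r'_v$ as a fractional linear function of $r_v$ with coefficients in $\mathbb{Q}(r_a,r_b)$ and non-zero ``determinant'' $8 r_a r_b M^3$, so it is invertible via $r_v = 4 r_a r_b M r'_v / (2 M^2 + (r_a-r_b)^2 r'_v)$. Hence $\mathbb{Q}(r_a, r_b, r_v) = \mathbb{Q}(r_a, r_b, r'_v)$ for every $v \in V$, and therefore
\[
    \mathbb{Q}\bigl(r_a, r_b,\, \{r_v : v \in V\}\bigr) \;=\; \mathbb{Q}\bigl(r_a, r_b,\, \{r'_v : v \in V\}\bigr).
\]
The left-hand field has transcendence degree $|V| + 2$ over $\mathbb{Q}$ by the generic-radii hypothesis, so the right-hand field has the same degree; being generated by only $|V| + 2$ elements, those generators must be algebraically independent, and in particular $\{r'_v : v \in V\}$ is algebraically independent, yielding (iii). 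The main obstacle in this plan is the sign and containment bookkeeping for the two inversions, which is precisely what the strict inequality $r_v < r_b < r_a$ supplied by \Cref{l:thefix} was set up to handle.
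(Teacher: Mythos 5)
Your proof is correct and takes a genuinely different — and substantially more streamlined — route than the paper's, chiefly in how condition (iii) is established. The paper also begins by applying \Cref{l:thefix} and reducing to $p_a=(0,0,-r_a)$, $p_b=(0,0,r_b)$, but then it rescales so that $r_b=1$, builds the Möbius map as a single inversion sandwiched between a translation, a scaling, and another translation, and proves (iii) indirectly: it shows via the implicit function theorem and \Cref{l:implicit} that the induced map $\alpha$ on radii of spheres tangent to $S_a,S_b$ is a bijective algebraic function over $\mathbb{Q}(\tilde r_a)$, and then invokes \Cref{l:algfunalgdep,l:algfuninv} to transfer algebraic independence. Your construction instead sandwiches a similarity between two inversions, tracks every sphere explicitly, and arrives at the closed-form fractional linear law
\[
    r'_v \;=\; \frac{2M^2\,r_v}{4r_ar_bM-(r_a-r_b)^2 r_v},
\]
from which (iii) follows by the elementary transcendence-degree count $\trdeg\mathbb{Q}(r_a,r_b,\{r'_v\})/\mathbb{Q}=|V|+2$ on $|V|+2$ generators. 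What this buys you is a proof that bypasses \Cref{l:implicit,l:algcoeff,l:algfunalgdep,l:algfuninv} entirely for this lemma and makes the radius distortion completely explicit (these algebraic-function lemmas are still needed elsewhere, notably in \Cref{l:specialframework}, so the machinery is not removed from the paper as a whole). Two small points worth flagging: the way you use \Cref{l:thefix} requires the packing it produces to still have generic radii — this is established in the proof of \Cref{l:thefix} but is not part of its statement, a gap shared with the paper's own proof of the present lemma; and your trichotomy check is done factor-by-factor rather than by locating the singularity of the full composition (which sits at $(0,0,\tfrac{4r_ar_b}{r_a-r_b})$), but the per-factor argument does suffice to guarantee that no sphere's inside and outside are exchanged, so the image is a packing.
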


\begin{proof}	
	By applying \Cref{l:thefix} and switching the vertices $a,b$ if necessary,
	we may suppose without loss of generality that $r_a > r_b > r_v$ for all $v \in V$.
	By rotating and translating $P$,
	we may also assume that $p_a = (0,0,-r_a)$ and $p_b = (0,0,r_b)$.
	Define the sphere packing $\tilde{P} = (G,\tilde{p},\tilde{r})$ by setting $\tilde{p}_v=p_v/r_b$ and $\tilde{r}_v=r_v/r_b$ for each $v \in V$.
	Since the set $\{r_v : v \in V \cup \{a,b\}\}$ is algebraically independent,
	the set $\{\tilde{r}_v : v \in V \cup \{a\}\}$ is also algebraically independent.
	Our proof now proceeds in two steps.
	We first find the unique M\"{o}bius transform $\phi$ that maps $\tilde{P}$ to a sphere packing $P'= (G \oplus K_2,p',r')$ where $x'_{a}=x'_b=y'_{a}=y'_b=0$, $z'_{a} = -1$, $z'_b= 1$ and $r'_a = r'_b = 1$.
	After constructing $P'$ and its corresponding M\"{o}bius transform $\phi$,
	we then prove that the set $\{r'_v : v \in V\}$ is algebraically independent.
	
	Define the M\"{o}bius transforms $\phi_1,\phi_2,\phi_3,\phi_4 : \mathbb{R}^3 \dashrightarrow \mathbb{R}^3$ where for each point $u = (u_x,u_y,u_z)$ in $\mathbb{R}^3$ we have
	\begin{align*}
		\phi_1(u) &:= u - \left(0,0,\frac{4\tilde{r}_a}{\tilde{r}_a-1} \right),\\
		\phi_2(u) &:= \frac{u}{\|u\|^2} \qquad \text{(defined for $u\neq (0,0,0)$)},\\
		\phi_3(u) &:= \frac{-4\tilde{r}_a(\tilde{r}_a+1)}{(\tilde{r}_a-1)^2} u,\\
		\phi_4(u) &:= u - \left(0,0,\frac{\tilde{r}_a+1}{\tilde{r}_a-1} \right).
	\end{align*}
	Now define the M\"{o}bius transform $\phi:= \phi_4 \circ \phi_3 \circ \phi_2 \circ \phi_1$,
	which is defined at every point except the singularity $\sigma := \left(0,0,4\tilde{r}_a/(\tilde{r}_a-1) \right)$.
	Note that
	\begin{equation*}
		\phi(0,0,0) = (0,0,0), \qquad \phi(0,0,2) = (0,0,2), \qquad \phi(0,0,-2\tilde{r}_a) = (0,0,-2).
	\end{equation*}
	Since $\tilde{r}_a = r_a/r_b >1$,
	the point $\sigma$ is not contained inside any spheres of $\tilde{P}$:
	$\sigma$ is not contained in the spheres for $a$ or $b$ since $\sigma >4$,
	and $\sigma$ is not contained in any other sphere since $\tilde{r}_v < 1$ for all $v \in V$ (and hence, taking into account that the spheres in $\tilde{P}$ for $a,v$ are in contact, we have $\tilde{z}_v + \tilde{r}_v < 2\tilde{r}_v < 4 < \sigma$).
	Hence $\phi(\tilde{P})$ is a sphere packing with contact graph $G\oplus K_2$.
	We now set $P' = \phi(\tilde{P})$.
	
	We now must prove that the set $\{r'_v : v \in V \}$ is algebraically independent.
	Define the sphere packing $Q := \phi_2 \circ \phi_1 (\tilde{P})$ with contact graph $G \oplus K_2$,
	centres $q: V\cup \{a,b\} \rightarrow \mathbb{R}^3$ and radii $s : V\cup \{a,b\} \rightarrow \mathbb{R}_{>0}$.
	Note that for every $v \in V \cup \{a,b\}$ we have
	\begin{equation*}
		r'_v = \frac{4\tilde{r}_a(\tilde{r}_a+1)}{(\tilde{r}_a-1)^2} s_v 
	\end{equation*}
	Hence the set $\{r'_v : v \in V\}$ is algebraically independent if the set $\{s_v : v \in V \}$ is algebraically independent over the field $\mathbb{F} := \mathbb{Q}(\tilde{r}_a)$.
	
	For $t>0$,
	fix $S_t$ to be any sphere of radius $t$ that is in contact with the spheres $S_a,S_b$ corresponding to vertices $a,b$ in $\tilde{P}$ (here we do not care whether the sphere $S_t$ intersects with any of the other spheres in $\tilde{P}$).
	Whilst there is a 1-dimensional family of spheres that $S_t$ can be,
	there exists a unique scalar $\alpha(t)>0$ such that each set $\phi_2 \circ \phi_1(S_t)$ is a sphere with radius $\alpha(t)$ which is in contact with the spheres $\phi_2\circ \phi_1(S_a)$ and $\phi_2\circ \phi_1(S_b)$ corresponding to vertices $a,b$ in $Q$ (yet again, we do not care whether it intersects with any of the other spheres in $Q$).
	Consider the map $\alpha: \mathbb{R}_{>0} \rightarrow \mathbb{R}_{>0}$ which maps each $t$ to its unique value $\alpha(t)$.
	Since M\"{o}bius transforms are injective and continuous except at their singularity,
	the map $\alpha$ is injective and continuous.
	Note that for each $v \in V$ we have $\alpha(\tilde{r}_v) = s_v$.
	It is now sufficient to prove $\alpha$ is a bijective algebraic function over $\mathbb{F}$,
	as it then follows from \Cref{l:algfunalgdep,l:algfuninv} that the set $\{s_v: v \in V \}$ is algebraically independent over $\mathbb{F}$.
	
	Similarly to before, fix $T_t$ to be any sphere of radius $t>0$ that is in contact with the spheres $T_a,T_b$ corresponding to vertices $a,b$ in $Q$, and fix to be the unique scalar $\beta(t)>0$ such that each set $(\phi_2 \circ \phi_1)^{-1}(T_t)$ is a sphere with radius $\beta(t)$ in contact with the spheres $(\phi_2\circ \phi_1)^{-1}(T_a)$ and $(\phi_2\circ \phi_1)^{-1}(T_b)$ in $\tilde{P}$.
	Then the map $\beta: \mathbb{R}_{>0} \rightarrow \mathbb{R}_{>0}$ which maps each $t$ to its unique value $\beta(t)$ is the inverse of $\alpha$.
	Hence $\alpha$ is bijective.
	
	Now fix each sphere $S_t$ to be the unique sphere of radius $t$ in contact with $S_a,S_b$ with radius $t$ and centre $p(t) = (x(t),0,z(t))$ for some $x(t)>0$ and $z(t)$.
	By the implicit function theorem (see \cite[Theorem 9.28]{rudin}),
	both $x$ and $z$ are continuous.
	By \Cref{l:implicit}, both $x,z-1$ (and hence $x,z$) are algebraic functions over $\mathbb{F}$ as they satisfy the equations
	\begin{align*}
		x(t)^2 + (z(t) - 1)^2 &= (t+1)^2,\\
		x(t)^2 + (z(t) - 1 + (1-\tilde{r}_a))^2 &= (t+\tilde{r}_a)^2
	\end{align*}
	for all $t>0$, and $\tilde{r}_a > 1$.
	Fix the non-zero scalar $\gamma:= \frac{4 \tilde{r}_a}{\tilde{r}_a -1} \in \mathbb{F}$.
	Note that the centre of the sphere $\phi_2 \circ \phi_1(S_t)$ is the point $q(t) = (\lambda (t), 0,\mu(t))$,
	where $\lambda,\mu$ are algebraic functions over $\mathbb{F}$ defined at each $t>0$ to be
	\begin{equation*}
		\lambda(t) := \frac{x(t)}{x(t)^2 + (z(t) - \gamma)^2}, \qquad \mu(t) := \frac{z(t)-\gamma}{x(t)^2 + (z(t)- \gamma)^2}.
	\end{equation*}
	Since $q_b= (0,0,\frac{\tilde{r}_a-1}{\tilde{r}_a+1})$ and $s_b = \gamma^{-1}$,
	it follows that $\alpha(t)$ satisfies the equation
	\begin{equation*}
		\lambda(t)^2 + \left( \mu(t)-\frac{\tilde{r}_a-1}{\tilde{r}_a+1} \right) ^2 = \left( \alpha(t) +  \gamma^{-1} \right)^2.
	\end{equation*}
	As $\lambda,\mu$ are algebraic functions over $\mathbb{F}$ and $\tilde{r}_a,\gamma \in \mathbb{F}$,
	the map	$\alpha$ is an algebraic function over $\mathbb{F}$ (\Cref{l:algcoeff}).
\end{proof}

\subsection{Properties of radii sets}

We require the following notation.
For a fixed graph $G=(V,E)$ and a copy of $K_2$ with vertices $\{a,b\}$,
we define the following sets:
\begin{align*}
	\radii_{a=b=1}(G \oplus K_2) &:= \left\{(r_v)_{v \in V}: (r_v)_{v \in V \cup \{a,b\}} \in \radii(G \oplus K_2) ,~ r_a=r_b=1 \right\},\\		
	\radii_{a \neq b}(G \oplus K_2) &:= \left\{r \in \radii(G \oplus K_2): r_a \neq r_b \right\},\\
	\radii_{a>b}(G \oplus K_2) &:= \left\{r \in \radii(G \oplus K_2): r_a>r_b \right\},\\
	\radii_{a>b>V}(G \oplus K_2) &:= \left\{r \in \radii(G \oplus K_2): r_a>r_b > r_v \text{ for all } v \in V \right\}.
\end{align*}
Observe that the latter three sets are all open subsets of $\radii (G \oplus K_2)$.

\begin{lemma}\label{l:thefix2}
	Let $G=(V,E)$ be a graph and $K_2$ be the complete graph with vertices $\{a,b\}$.
	If the set $\radii_{a > b}(G \oplus K_2)$ has non-empty interior,
	then the set $\radii_{a>b>V}(G \oplus K_2)$ has non-empty interior.
\end{lemma}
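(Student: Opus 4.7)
The plan is to combine \Cref{l:thefix} with the standard fact that a semi-algebraic set defined over $\mathbb{Q}$ has non-empty interior if and only if it contains a point with algebraically independent coordinates. The only real work is to verify that the M\"obius transformation used in \Cref{l:thefix} also preserves the additional inequality $r_a > r_b$.

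First, I would observe that $\radii_{a>b}(G \oplus K_2)$ is a semi-algebraic subset of $\mathbb{R}^{V\cup\{a,b\}}$ defined over $\mathbb{Q}$: it is the intersection of $\radii(G \oplus K_2)$ (which is semi-algebraic over $\mathbb{Q}$ by Tarski--Seidenberg applied to the defining conditions of a sphere packing) with the rationally-defined open half-space $\{r : r_a > r_b\}$. Since its interior is assumed non-empty, a standard Baire-category argument produces a point $r$ in this interior whose coordinates are algebraically independent over $\mathbb{Q}$: any open set in $\mathbb{R}^{V\cup\{a,b\}}$ meets the complement of the countable union of proper algebraic subvarieties corresponding to non-zero polynomials over $\mathbb{Q}$.

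Next, I would take a sphere packing $P = (G \oplus K_2, p, r)$ realising this choice of radii and apply \Cref{l:thefix} to it. The conclusion of that lemma gives a M\"obius-equivalent packing $P' = (G \oplus K_2, p', r')$ with $r'_a > r'_v$ and $r'_b > r'_v$ for every $v \in V$. Since we chose $r$ with $r_a > r_b$, no relabelling of $\{a,b\}$ is invoked in the proof of \Cref{l:thefix}, and the explicit formula for the new radii shows that the M\"obius transform $\phi^\varepsilon$ appearing there fixes the two distinguished radii (plugging $p_a = (0,0,-r_a)$ and $p_b = (0,0,r_b)$ into $r_v t^2/(\|te_x - p_v\|^2 - r_v^2)$ returns $r_a$ and $r_b$ respectively). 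Hence $r'_a = r_a > r_b = r'_b$, and therefore $r' \in \radii_{a>b>V}(G \oplus K_2)$. Moreover, the construction in \Cref{l:thefix} also guarantees that $r' = f(\varepsilon)$ has algebraically independent coordinates for a suitable choice of $\varepsilon$ in the dense subset $D$ exhibited there.

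Finally, I would conclude by applying the converse direction of the same semi-algebraic principle. The set $\radii_{a>b>V}(G \oplus K_2)$ is semi-algebraic over $\mathbb{Q}$ (it is cut out of $\radii(G \oplus K_2)$ by the rational inequalities $r_a > r_b$ and $r_b > r_v$ for $v \in V$), and it contains the point $r'$ with algebraically independent coordinates. A rationally-defined semi-algebraic set with empty interior would be contained in a proper algebraic variety over $\mathbb{Q}$, and hence could not contain a generic point; so $\radii_{a>b>V}(G \oplus K_2)$ must have non-empty interior. I do not expect a genuine obstacle: everything of substance is carried by \Cref{l:thefix}, and the only thing to verify by hand is that $\phi^\varepsilon$ preserves $r_a$ and $r_b$, which is an immediate substitution in the explicit radius formula.
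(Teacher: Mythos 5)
Your proof is correct, but it takes a genuinely different route from the paper's. The paper's argument shows that the M\"obius transform supplied by \Cref{l:thefix} induces a continuous open map on the space of pairs $(p,r)$, then composes with the projection to radii to conclude that the image of a suitable open set is a non-empty open subset of $\radii_{a>b>V}(G \oplus K_2)$; establishing this openness is the bulk of the work. You bypass the openness argument entirely: you pick a generic point $r$ in the interior of $\radii_{a>b}(G \oplus K_2)$ (possible since, as the paper notes just before \Cref{p:openradii}, a rationally-defined semi-algebraic set has non-empty interior if and only if it contains a generic point), push it through \Cref{l:thefix} to land on a generic $r' \in \radii_{a>b>V}(G \oplus K_2)$, and then invoke the converse direction of the same semi-algebraic criterion. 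Both proofs rest on the same hand-verification you carry out --- that the transform $\phi^\varepsilon$ in the proof of \Cref{l:thefix} fixes $r_a$ and $r_b$, so the hypothesis $r_a > r_b$ survives the transformation. Your route is shorter and more elementary, trading the open-map machinery for the generic-point characterisation; the only caveat is that you rely on a fact extracted from the \emph{proof} of \Cref{l:thefix} rather than its statement alone (namely that the image radii $f(\varepsilon)$ are algebraically independent for $\varepsilon$ in the dense set $D$), which is a reasonable move in an internal development but worth flagging explicitly.
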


\begin{proof}
	Fix the open set $X = (\mathbb{R}^3)^{V \cup \{a,b\}} \times \mathbb{R}_{>0}^{V \cup \{a,b\}}$ and the projection $\pi : X \mapsto \mathbb{R}^{V \cup\{a,b\}}$ where $\pi(p,r) = r$.
	Define $Z \subset X$ to be the set of pairs $(p,r)$ that each correspond to a sphere packing $(G\oplus K_2,p,r)$ with $r_a > r_b$.	
	With this set-up we see that $\pi(Z) =\radii_{a > b}(G \oplus K_2)$.
	As $\radii_{a > b}(G \oplus K_2)$ has non-empty interior,
	there exists an open set $U \subset Z$ such that $\pi(U)$ is an open subset of $\mathbb{R}^{V \cup\{a,b\}}$.
	Choose $(p',r') \in U$ so that $r'$ has algebraically independent coordinates,
	and set $P = (G \oplus K_2,p',r')$.
	By \Cref{l:thefix},
	there exists a M\"{o}bius transformation $\phi$ (with singularity $\sigma$) such that $\phi(P)$ is a sphere packing and $\pi \circ \phi(P) \in \radii_{a > b}(G \oplus K_2)$.
	
	We now require the following notation.
	Given a pair $(x,t) \in \mathbb{R}^3 \times \mathbb{R}_{>0}$ with $x \neq \sigma$,
	fix $\phi(x,t)$ to denote the pair $(x',t')$ chosen so that $\phi(t \mathbb{S}^2 + x) = t' \mathbb{S}^2 + x'$ (recall that $\mathbb{S}^2$ is the sphere with centre $(0,0,0)$ and radius $1$).
	Fix $X' \subset X$ to be the open dense subset of pairs $(p,r)$ where $p_v \neq \sigma$ for all $v \in V \cup \{a,b\}$,
	and fix $Z' = Z \cap X'$.
	We note here that both $X'$ and $Z'$ are non-empty (since they both contain $(p',r')$), and each is an open subset (of $X$ and $Z$ respectively).
	We now define $f: X' \rightarrow X$ to be the continuous map where for each $(p,r)$ with $p_v \neq \sigma$ for all $v \in V \cup \{a,b\}$,
	$f(p,r) = (\phi(p_v,r_v))_{v \in V \cup \{a,b\}}$.
	Since the map $(x,t) \mapsto \phi(x,t)$ is an open map (when restricted to its open dense set of well-defined points in its domain),
	so too is the map $f$.
	Since $\phi$ (with a restricted domain) is invertible,
	it follows that the domain and codomain restricted map $h = f|_{Z'}^{Z}$ is an open map.
	Hence the composition map $g =\pi \circ h$ is an open map.
	Furthermore, the image of $g$ is exactly the set $\radii_{a > b}(G \oplus K_2)$.
	We now note three important facts: (i) $g$ is a continuous open map on the non-empty open subset $U \cap Z'$ of $Z'$; (ii) $\radii_{a > b > v}(G \oplus K_2)$ is an open subset of $\radii_{a > b}(G \oplus K_2)$; (iii) $g(p',r') \in \radii_{a > b}(G \oplus K_2)$.
	Hence,
	$g(U \cap Z') \cap \radii_{a > b > v}(G \oplus K_2)$ is a non-empty open subset of $\mathbb{R}^{V \cup\{a,b\}}$,
	which concludes the proof.
\end{proof}

\begin{lemma}\label{l:openset}
	Let $G=(V,E)$ be a graph and $K_2$ be the complete graph with vertices $\{a,b\}$.
	If the set $\radii_{a=b=1}(G \oplus K_2)$ has non-empty interior,
	then the set $\radii(G \oplus K_2)$ has non-empty interior.
\end{lemma}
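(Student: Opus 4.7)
The plan is to show that the hypothesised open subset of $\radii_{a=b=1}(G \oplus K_2)$ can be ``unfolded'' by applying a two-parameter family of M\"{o}bius transforms, yielding an open subset of $\radii(G \oplus K_2)$. First, I would fix any $r^*$ in the interior of $\radii_{a=b=1}(G\oplus K_2)$ along with a small open neighbourhood $U$ of it inside that interior. For each $r \in U$, I pick any sphere packing $P_r = (G \oplus K_2, p, r)$ realising those radii with $r_a=r_b=1$, and apply a rigid motion to place it in the ``standard form'' of \Cref{subsec:sf}: $p_a = (0,0,-1)$ and $p_b = (0,0,1)$. The tangency conditions $\|p_v-p_a\|=\|p_v-p_b\|=1+r_v$ then force each $p_v$ (for $v \in V$) into the $xy$-plane at distance $\sqrt{r_v(r_v+2)}$ from the origin.

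Next, I would apply the two-parameter family of M\"{o}bius transforms $\phi_{c,\lambda}$ given by inversions at $(0,0,c)$ with radius $\lambda$, for $c > 2$ and $\lambda > 0$. The singularity $(0,0,c)$ lies strictly outside every sphere of every such packing, so $\phi_{c,\lambda}(P_r)$ is a sphere packing with the same contact graph $G \oplus K_2$. Using the standard formula for how an inversion transforms the radius of a sphere (namely $\lambda^2 r / (d^2-r^2)$, where $d$ is the distance from the inversion centre to the sphere's centre), a direct calculation gives the output radii of $\phi_{c,\lambda}(P_r)$ as
\begin{equation*}
    r_a^{\mathrm{out}} = \frac{\lambda^2}{c(c+2)}, \qquad r_b^{\mathrm{out}} = \frac{\lambda^2}{c(c-2)}, \qquad r_v^{\mathrm{out}} = \frac{\lambda^2 r_v}{2 r_v + c^2} \text{ for } v \in V.
\end{equation*}
Critically, thanks to the rotational symmetry of $\phi_{c,\lambda}$ around the $z$-axis, these formulas depend only on the input radii $r$ and parameters $(c,\lambda)$, not on the angular positions of the centres $p_v$. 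Hence the map $F : U \times N \to \mathbb{R}^{V \cup \{a,b\}}$ sending $(r,c,\lambda)$ to these output radii (with $N$ a suitable open neighbourhood of a fixed $(c_0, \lambda_0)$, $c_0 > 2$) is well-defined and takes values in $\radii(G\oplus K_2)$, \emph{without} any need to choose packings $P_r$ in a consistent manner as $r$ varies.

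Finally, I would verify that $F$ is a local diffeomorphism at, say, $(r^*, c_0, \lambda_0)$: its Jacobian is block-triangular, with a diagonal $|V|\times |V|$ block (having strictly positive entries $\lambda^2 c^2/(2r_v+c^2)^2$), a vanishing $2\times |V|$ block $\partial(r_a^{\mathrm{out}}, r_b^{\mathrm{out}})/\partial r$ (the key consequence of $r_a^{\mathrm{out}}, r_b^{\mathrm{out}}$ not depending on $r$), and a $2\times 2$ block $\partial(r_a^{\mathrm{out}}, r_b^{\mathrm{out}})/\partial (c,\lambda)$ whose determinant is a non-zero rational function of $(c,\lambda)$ for $c > 2$. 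By the inverse function theorem, $F$ has open image in $\mathbb{R}^{V \cup \{a,b\}}$, so $\radii(G \oplus K_2)$ has non-empty interior. The main subtle step is the well-definedness of $F$: without the $z$-axis symmetry making $r_v^{\mathrm{out}}$ a function of $r_v$ alone, I would need a consistent smoothly-varying parameterisation $r \mapsto P_r$, which need not exist on all of $U$ since the semi-algebraic set of packings realising $U$ is not guaranteed to be a smooth manifold. Choosing the inversions along the $z$-axis is precisely what sidesteps this difficulty.
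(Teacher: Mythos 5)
Your proof is correct, and it takes a genuinely different and more direct route than the paper. Both proofs exploit a two-parameter family of $z$-axis-symmetric M\"obius inversions, but they use it very differently. The paper's proof first reduces the claim to showing $\radii_{a>b>V}(G \oplus K_2)$ has non-empty interior (via $\radii_{a\neq b}$ and $\radii_{a>b}$, and using \Cref{l:thefix2}, which in turn invokes \Cref{l:thefix}), then defines a family $\phi^{\lambda,\mu}$ parametrised by the radii of $a$ and $b$, constructs a surjective map between sets $Z'$ and $Z''$ of centre-radius pairs, and argues by pulling open sets back through continuous maps and then pushing forward by a projection. Your proof works directly with $\radii_{a=b=1}$ and $\radii(G\oplus K_2)$: you push forward by inversions $\phi_{c,\lambda}$ with centre $(0,0,c)$, $c>2$, and observe that rotational symmetry about the $z$-axis makes the output radii a globally well-defined smooth function $F(r,c,\lambda)$ independent of the choice of realisation $P_r$. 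You then verify openness by an explicit block-triangular Jacobian (the $|V|\times|V|$ block being diagonal with entries $\lambda^2 c^2/(2r_v+c^2)^2$, the $2\times|V|$ block vanishing, and the $2\times 2$ block having determinant $8\lambda^3/(c^2(c^2-4)^2) \neq 0$). This entirely bypasses \Cref{l:thefix2} and the auxiliary sets $\radii_{a\neq b}$, $\radii_{a>b}$, $\radii_{a>b>V}$, and replaces the paper's openness-of-projection argument (which requires some care, since $Z'$ is not open in the ambient space) with a transparent inverse function theorem application. The key point you identify at the end -- that the $z$-axis symmetry is precisely what makes $F$ well-defined without choosing a smooth family of realisations -- is exactly the right thing to flag, and it is also why the paper's $\phi^{\lambda,\mu}$ and your $\phi_{c,\lambda}$ are both inversions centred on the $z$-axis.

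One small point worth spelling out in a final write-up: the fact that $\phi_{c,\lambda}$ preserves the contact graph relies not just on $(0,0,c)$ being outside each sphere, but on it lying in the unbounded connected component of the complement of the packing, which is automatic here since the interiors are pairwise disjoint and $(0,0,c)$ is outside every sphere. You touch on this but it deserves one sentence.
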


\begin{proof}
	As switching $a$ and $b$ is an automorphism of $G \oplus K_2$,
	the set $\radii_{a \neq b}(G \oplus K_2)$ has non-empty interior if and only if the set $\radii_{a>b}(G \oplus K_2)$ has non-empty interior.
	Furthermore,
	as set $\radii_{a \neq b}(G \oplus K_2)$ is the intersection of an open dense set and $\radii(G \oplus K_2)$,
	it follows that the set $\radii(G \oplus K_2)$ has non-empty interior if and only if the set $\radii_{a \neq b}(G \oplus K_2)$ has non-empty interior.
	Hence the set $\radii(G \oplus K_2)$ has non-empty interior if and only if the set $\radii_{a>b}(G \oplus K_2)$ has non-empty interior.
	By \Cref{l:thefix2},
	the set $\radii_{a>b}(G \oplus K_2)$ has non-empty interior if and only if the set $\radii_{a>b>V}(G \oplus K_2)$ has non-empty interior.
	Hence the set $\radii(G \oplus K_2)$ has non-empty interior if and only if the set $\radii_{a>b>V}(G \oplus K_2)$ has non-empty interior.
	
	Define $Z \subset (\mathbb{R}^3)^{V \cup \{a,b\}} \times \mathbb{R}_{>0}^{V \cup \{a,b\}}$ be the set of pairs $(p,r)$ corresponding to a sphere packing $(G\oplus K_2,p,r)$ where $p_a = (0,0,r_a)$ and $p_b=(0,0,-r_b)$.	
	Define $Z' \subset Z$ to be the subset of pairs $(p,r)$ where $r_a>r_b > r_v$ for all $v \in V$ and define $Z'' \subset Z$ to be the subset of pairs $(p,r)$ where $r_a =r_b = 1$.
	Also define the projection maps
	\begin{align*}
		\pi_1 &: (\mathbb{R}^3)^{V \cup \{a,b\}} \times \mathbb{R}_{>0}^{V \cup \{a,b\}} \rightarrow \mathbb{R}_{>0}^{V \cup \{a,b\}}, ~ (p,r) \mapsto r \\
		\pi_2 &: \mathbb{R}_{>0}^{V \cup \{a,b\}} \rightarrow \mathbb{R}_{>0}^{V}, ~ (r_v)_{v \in V \cup \{a,b\}} \mapsto (r_v)_{v \in V}.
	\end{align*}
	With this we have
	\begin{equation*}
		\pi_1(Z) = \radii(G \oplus K_2), ~ ~ \pi_1(Z') = \radii_{a > b > V}(G \oplus K_2), ~ ~ \pi_2 \circ \pi_1(Z'') = \radii_{a=b=1}(G \oplus K_2).
	\end{equation*}
	
	For each pair of scalars $\lambda>\mu >0$,
	define the M\"{o}bius transforms $\phi_0^{\lambda,\mu},\phi_1^{\lambda,\mu},\phi_2^{\lambda,\mu},\phi_3^{\lambda,\mu},\phi_4^{\lambda,\mu} : \mathbb{R}^3 \dashrightarrow \mathbb{R}^3$ where for each $u = (u_x,u_y,u_z) \in \mathbb{R}^3$ we have
	\begin{align*}
		\phi_0^{\lambda,\mu}(u) &:= \frac{1}{\mu} u,\\
		\phi_1^{\lambda,\mu}(u) &:= u - \left(0,0,\frac{4(\lambda/\mu)}{(\lambda/\mu)-1} \right),\\
		\phi_2^{\lambda,\mu}(u) &:= \frac{u}{\|u\|^2} \qquad \text{(if $u\neq (0,0,0)$)},\\
		\phi_3^{\lambda,\mu}(u) &:= \frac{-4(\lambda/\mu)((\lambda/\mu)+1)}{((\lambda/\mu)-1)^2} u,\\
		\phi_4^{\lambda,\mu}(u) &:= u - \left(0,0,\frac{(\lambda/\mu)+1}{(\lambda/\mu)-1} \right).
	\end{align*}
	Define also the M\"{o}bius transform $\phi^{\lambda,\mu} := \phi_4^{\lambda,\mu} \circ \phi_3^{\lambda,\mu} \circ \phi_2^{\lambda,\mu} \circ \phi_1^{\lambda,\mu} \circ \phi_0^{\lambda,\mu}$.
	Using the techniques outlined in the proof of \Cref{l:correctradii},
	we note that
	\begin{equation*}
		\phi^{\lambda,\mu}(0,0,0) = (0,0,0), \qquad \phi^{\lambda,\mu}(0,0,2\mu) = (0,0,2), \qquad \phi^{\lambda,\mu}(0,0,-2\lambda) = (0,0,-2),
	\end{equation*}
	and for every sphere packing $P'=(G\oplus K_2,p',r')$ with $(p',r') \in Z'$,
	there exists a unique sphere packing $P''=(G\oplus K_2,p'',r'')$ with $(p'',r'') \in Z''$ such that $P'' = \phi^{r_a',r_b'}(P')$.
	From this, fix $\phi:Z' \rightarrow Z''$ to be the continuous map that sends each $(p',r')$ to its unique $(p'',r'')$ by the map $\phi^{r_a',r_b'}$.
	Note that the map $\phi$ is surjective:
	for any $\lambda > \mu >0$ and any $(p'',r'') \in Z''$,
	the triple $(G\oplus K_2,p',r') := (\phi^{\lambda,\mu})^{-1}(G\oplus K_2,p'',r'')$ is a sphere packing with $(p',r') \in Z'$.

	Fix a non-empty open subset $U \subset \mathbb{R}^V$ contained within the interior of $\radii_{a=b=1}(G \oplus K_2)$.
	Since each of the maps $\phi$, $\pi_1$ and $\pi_2$ are continuous and surjective,
	the set
	\begin{equation*}
		U' := \phi^{-1}\left( (\pi_2 \circ \pi_1)^{-1}(U) \cap Z'' \right) \neq \emptyset
	\end{equation*}
	is an open subset of $Z'$.
	Since linear projections are open maps and $\pi_1(Z') = \radii_{a>b}(G \oplus K_2)$,
	the set $\pi_1(U') \subset \mathbb{R}^{V \cup \{a,b\}}$ is open and contained within $\radii_{a>b}(G \oplus K_2)$.
	Hence $\radii_{a>b}(G \oplus K_2)$ has non-empty interior,
	and so $\radii(G \oplus K_2)$ has non-empty interior also by the aforementioned correspondence between the two sets.
\end{proof}

\section{Proof of \texorpdfstring{\Cref{t:main}}{main theorem}}\label{sec:main}

\subsection{Cycle structure of the contact graph}

We begin with the following technical result regarding a special type of framework.

\begin{lemma}\label{l:specialframework}
	Let $G=(V,E)$ be the graph formed from the cycle $(v_1,\ldots,v_k)$ (with $k \geq 3$) by adding a vertex $v_0$ adjacent to all vertices in the cycle.
	Suppose that the framework $(G,p)$ in $\mathbb{R}^2$ is constructed from the positive scalars $r_1,\ldots,r_k$ in the following way:
	\begin{enumerate}
		\item \label{item1} $p_{v_i} = (x_i,y_i)$ and $x_i^2 + y_i^2 = r_i^2 + 2r_i$ for each $1 \leq i \leq k$,
		and $x_0 = y_0 = y_1= 0$;
		\item \label{item2} the pair $p_{v_i},p_{v_{i+1}}$ are linearly independent and the vector $p_{v_{i+1}}$ is clockwise of $p_{v_i}$ for each $1 \leq i \leq k-1$,
		and similarly the pair $p_{v_1},p_{v_k}$ are linearly independent and the vector $p_{v_1}$ is clockwise of $p_{v_k}$;
		\item \label{item3} $\|p_{v_i} - p_{v_{i+1}}\| = r_i+r_{i+1}$ for each $1 \leq i \leq k-1$.
	\end{enumerate}
	(See \Cref{fig:special framework} for an example of such a framework with $k = 4$.)
	If $\|p_{v_1}-p_{v_k}\| = r_1 + r_k$ then the set $\{r_1,\ldots,r_k\}$ is not algebraically independent.
\end{lemma}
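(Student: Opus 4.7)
My plan is to express the coordinates of each $p_{v_i}$ as algebraic functions over $\mathbb{Q}$ of the radii $r_1, \ldots, r_i$ via iterated application of \Cref{l:implicit}, and then to interpret the closing condition $\|p_{v_1} - p_{v_k}\| = r_1 + r_k$ as the vanishing of a non-trivial algebraic function of $r_1, \ldots, r_k$, which forces algebraic dependence of the radii.

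First I would extend the given configuration to a continuous family over a small open neighbourhood $D \subset \mathbb{R}^k_{>0}$ of $(r_1, \ldots, r_k)$: for each nearby tuple $\tilde r \in D$ there are continuous selections $\tilde x_i, \tilde y_i : D \to \mathbb{R}$ realising conditions (i)--(iii), and these are unique because the clockwise requirement in (ii) picks out a single intersection point at every step. I would then argue by induction on $i$ that each $\tilde x_i, \tilde y_i$ is algebraic over $\mathbb{Q}$. The base case $\tilde y_1 \equiv 0$, $\tilde x_1 = \sqrt{\tilde r_1^2 + 2\tilde r_1}$ is immediate. For the inductive step, $\tilde x_{i+1}, \tilde y_{i+1}$ satisfy the two equations
\begin{align*}
\tilde x_{i+1}^2 + \tilde y_{i+1}^2 &= \tilde r_{i+1}^2 + 2 \tilde r_{i+1},\\
(\tilde x_{i+1} - \tilde x_i)^2 + (\tilde y_{i+1} - \tilde y_i)^2 &= (\tilde r_i + \tilde r_{i+1})^2,
\end{align*}
which fit the hypotheses of \Cref{l:implicit} with $\alpha := \tilde x_i$ and $\beta := \tilde y_i$; the required non-vanishing condition holds because $\tilde x_i^2 + \tilde y_i^2 = \tilde r_i^2 + 2\tilde r_i > 0$ forces at least one of $\tilde x_i, \tilde y_i$ to be non-zero on $D$.

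Once all coordinate functions have been shown to be algebraic, the closing condition can be rewritten as $F(\tilde r) = 0$ at $\tilde r = (r_1, \ldots, r_k)$, where
\[
F(\tilde r_1, \ldots, \tilde r_k) := (\tilde x_1 - \tilde x_k)^2 + \tilde y_k^2 - (\tilde r_1 + \tilde r_k)^2
\]
is an algebraic function over $\mathbb{Q}$ on $D$. Invoking the zero-set property of non-constant algebraic functions described at the end of \Cref{sec:prelim}, it suffices to show that $F$ is not identically zero on $D$; once this is granted, the tuple $(r_1, \ldots, r_k)$ lies in a proper Zariski-closed subset of $\mathbb{R}^k$ defined over $\mathbb{Q}$, and hence the set $\{r_1, \ldots, r_k\}$ is algebraically dependent.

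The main obstacle is verifying that $F \not\equiv 0$. I would handle this using the inner-product identity $\tilde p_{v_i} \cdot \tilde p_{v_{i+1}} = \tilde r_i + \tilde r_{i+1} - \tilde r_i \tilde r_{i+1}$, obtained by combining the two displayed equations above; this reveals that the angle $\theta_i$ subtended at the origin by $\tilde p_{v_i}$ and $\tilde p_{v_{i+1}}$ depends only on $\tilde r_i$ and $\tilde r_{i+1}$. Consequently the total angular deflection from $\tilde p_{v_1}$ to $\tilde p_{v_k}$ involves all of $\tilde r_1, \ldots, \tilde r_k$, whereas the closing condition forces the angle between $\tilde p_{v_1}$ and $\tilde p_{v_k}$ to equal a function of only $\tilde r_1$ and $\tilde r_k$. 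Varying $\tilde r_k$ alone on $D$ (with the other radii fixed) changes the first quantity through $\theta_{k-1}(\tilde r_{k-1}, \tilde r_k)$ in a structurally different way from how it changes the second, so the two cannot coincide identically on $D$, producing a point where $F \neq 0$ and completing the proof.
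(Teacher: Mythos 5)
Your plan tracks the paper's through the first two stages: parameterising the realisation over an open set $D\subset\mathbb{R}^k_{>0}$, invoking \Cref{l:implicit} inductively to show the coordinate maps are algebraic over $\mathbb{Q}$, and introducing the closing function $F$ whose vanishing at $r$ would force algebraic dependence provided $F\not\equiv 0$ on $D$. The proofs diverge in how that non-degeneracy is established. The paper argues indirectly: under the hypothesis of algebraic independence, $F(r)=0$ forces $F\equiv 0$; \Cref{l:implicit} is then applied a second time, in reverse from $v_k$ back to $v_1$, to conclude $x_i,y_i\in\mathbb{A}_D(X_1,X_i)$ for every $i$; for a middle index $j$ this makes the three circle equations at $p_{v_j}$ admit a one-parameter family of solutions, so the $3\times 3$ Jacobian determinant must vanish identically, and unwinding that determinant condition yields an explicit quadratic relation between $r_{j-1}$ and $r_{j+1}$, contradicting independence. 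Your proposal pursues a genuinely different and more geometric route, namely proving $F\not\equiv 0$ directly by comparing the cumulative turning angle along the chain with the subtended angle forced by the closing condition.

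The difficulty is that your decisive step is asserted rather than proved. Saying that varying $\tilde r_k$ changes the two quantities ``in a structurally different way, so the two cannot coincide identically on $D$'' is exactly where the mathematics should be, and as written it is an appeal to intuition. To close the gap, set $\theta(s,t):=\arccos\dfrac{s+t-st}{\sqrt{(s^2+2s)(t^2+2t)}}$; on a connected piece of $D$ the condition $F\equiv 0$ is equivalent to
\[
\theta(r_1,r_2)+\theta(r_2,r_3)+\cdots+\theta(r_{k-1},r_k)=\epsilon\,\theta(r_1,r_k)+2\pi m
\]
for some fixed $\epsilon\in\{\pm 1\}$ and $m\in\mathbb{Z}$. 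Differentiating in $r_k$ gives $\partial_t\theta(r_{k-1},r_k)=\epsilon\,\partial_t\theta(r_1,r_k)$, and differentiating this in turn with respect to $r_{k-1}$ would force $\partial_s\partial_t\theta\equiv 0$. A direct computation gives
\[
\partial_t\theta(s,t)=\frac{s-t+2st}{(t^2+2t)\sqrt{4s^2t+4st^2+2st-s^2-t^2}},
\]
whose $s$-dependence is manifest, so $\partial_s\partial_t\theta\not\equiv 0$. Some explicit verification of this kind is the missing ingredient; once supplied, your route does close the lemma and gives a rather more transparent reason why the closing identity cannot hold generically than the determinant argument in the paper.
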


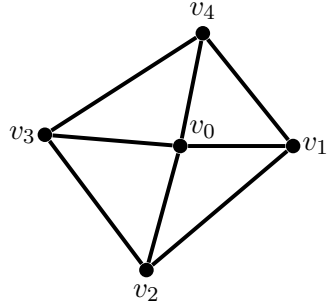
\begin{figure}[htp]
	\begin{center}
        \begin{tikzpicture}[scale=1.5]
			\node[vertex] (0) at (0,0) {};
			\node[vertex] (1) at (1,0) {};
			\node[vertex] (2) at (-0.3,-1.1) {};
			\node[vertex] (3) at (-1.2,0.1) {};
			\node[vertex] (4) at (0.2,1) {};
			
			\node (v0) at (0.2,0.15) {$v_0$};
			\node (v1) at (1.2,0) {$v_1$};
			\node (v2) at (-0.3,-1.3) {$v_2$};
			\node (v3) at (-1.4,0.1) {$v_3$};
			\node (v4) at (0.2,1.2) {$v_4$};
			
			\draw[edge] (0)edge(1);
			\draw[edge] (0)edge(2);
			\draw[edge] (0)edge(3);
			\draw[edge] (0)edge(4);
			
			\draw[edge] (1)edge(2);
			\draw[edge] (2)edge(3);
			\draw[edge] (3)edge(4);
			\draw[edge] (4)edge(1);
		\end{tikzpicture}
	\end{center}
	\caption{One example of a framework described in \Cref{l:specialframework} for $k=4$. 
	Each vertex $v_i$ is positioned at the point $p_{v_i} = (x_i,y_i)$ in $\mathbb{R}^2$.
	Every edge $v_i v_{i+1}$ with $i \in \{1,2,3\}$ has length $r_i + r_j$, and every edge $v_0 v_i$ with $i \in \{1,2,3,4\}$ has length $\sqrt{r_i^2 + 2r_i}$. 
	By \Cref{l:specialframework}, the set $\{r_1,r_2,r_3,r_4\}$ is algebraically dependent if the edge $v_1v_4$ has length $r_1+r_4$.}\label{fig:special framework}
\end{figure}

\begin{proof}
	For every positive integer $n$, fix $[n] :=\{1,\ldots,n\}$.
	For the sake of contradiction, suppose that the set $\{r_1,\ldots,r_k\}$ is algebraically independent and $\|p_{v_1}-p_{v_k}\| = r_1 + r_k$.
	Fix $D \subset \mathbb{R}^k_{>0}$ be the open convex set of vectors $s=(s_i)_{i\in[k]}$ where the construction of a realisation of $G$ from $s$ in the same way we constructed $p$ from $r$ (ignoring for now the condition that $\|p_{v_1}-p_{v_k}\|=r_1+r_k$) is possible.
	Note that this is equivalent that the following triangle inequalities hold for every $i \in [k-1]$:
	\begin{align*}
		s_i + s_{i+1} + \sqrt{s_{i+1}^2 + 2s_{i+1}}  &>  \sqrt{s_{i}^2 + 2s_{i}}, \\
		s_i + s_{i+1} + \sqrt{s_{i}^2 + 2s_{i}} &>  \sqrt{s_{i+1}^2 + 2s_{i+1}},\\
		\sqrt{s_{i}^2 + 2s_{i}} +  \sqrt{s_{i+1}^2 + 2s_{i+1}} &> s_i + s_{i+1}.
	\end{align*}
	Denote the unique realisation constructed from $s$ by $p(s) = (p_v(s))_{v \in V}$,
	where $p_{v_i}(s) = (x_i(s),y_i(s))$ for each $v_i \in V$.
	Hence the properties \ref{item1}, \ref{item2}, \ref{item3} given in the statement of \Cref{l:specialframework} hold with $r$ replaced with $s$ and $p$ replaced with $p(s)$.
	Importantly, we note that each of the maps $x_0,y_0,\ldots,x_k,y_k$ is continuous, and for all $s \in D$ we have $x_0(s)  = y_0(s) = y_1(s) =0$ and $x_1(s) = \sqrt{s_1^2 + 2s_1}$ (and so $x_1 \in \mathbb{A}_D(X_1)$).
	For every $i \in \{2,\ldots,k\}$ and every $s \in D$ we have
	\begin{align*}
		x_{i}(s)^2 + y_{i}(s)^2 &= s_{i}^2 + 2s_{i},\\
		(x_{i}(s)-x_{i-1}(s))^2 + (y_{i}(s)-y_{i-1}(s))^2 &= (s_i + s_{i-1})^2.
	\end{align*}
	Hence by inductively applying \Cref{l:implicit} we have that $x_{i},y_{i} \in \mathbb{A}_D(X_1,\ldots,X_i)$ for each $i \in [k]$.
	
	Define the algebraic function $f \in \mathbb{A}_D(X_1,\ldots,X_k)$ where
	\begin{equation*}
		f(s) := \|p_k(s) - p_1(s)\|^2 - (s_k + s_1)^2 = (x_k(s) -x_1(s))^2 + (y_k(s) - y_1(s))^2 - (s_k + s_1)^2.
	\end{equation*}
	Since $f(r) = 0$ and the set $\{r_1,\ldots,r_k\}$ is algebraically independent,
	it follows that $f = 0$.
	Hence by \Cref{l:implicit},
	$x_k,y_k \in \mathbb{A}_D(X_1,X_k)$.
	By applying the same inductive argument as before except in the reverse direction --
	in that we start with the observation that $x_k,y_k \in \mathbb{A}_D(X_1,X_k)$ and define $x_{i-1},y_{i-1}$ from $x_i,y_i$ at each step -- we see that $x_{i},y_{i} \in \mathbb{A}_D(X_1,X_i,\ldots,X_k)$ also for each $i \in [k]$ with $i \neq 1$.
	Hence $x_{i},y_{i} \in \mathbb{A}_D(X_1,X_i)$ for each $i \in [k]$.
	
	For each $\in [k]$ define the sets
	\begin{equation*}
		D_i := \left\{(s_1,s_i) : (s_\ell)_{\ell \in [k]} \in D \right\}, \qquad Z_i := \left\{s_i : (r_1,s_i) \in D_i \right\}.
	\end{equation*}
	Since $D$ is an open set containing $r$,
	each of the sets $D_i$ and $Z_i$ is an open non-empty subset of $\mathbb{R}^2$ and $\mathbb{R}$ respectively,
	and $r_i \in Z_i$.
	For each $i \in [k]$ we have $x_{i},y_{i} \in \mathbb{A}_D(X_1,X_i)$,
	and so we construct the well-defined map $\tilde{x}_i :D_i \rightarrow \mathbb{R}_{>0}$ by setting $\tilde{x}_i(s_1,s_i) := x_i(s)$ for any choice of $s = (s_\ell)_{\ell \in [k]} \in D$.
	We analogously construct the well-defined maps $\tilde{y}_i :D_i \rightarrow \mathbb{R}_{>0}$.
	Now fix $2 \leq j \leq k-1$.
	Any $s \in D$ satisfies the following equations:
	\begin{align*}
		\tilde{x}_j(s_1,s_j)^2 + \tilde{y}_j(s_1,s_j)^2 &= s_j^2 + 2s_j,\\		
		(\tilde{x}_j(s_1,s_j)-\tilde{x}_{j-1}(s_1,s_{j-1}))^2 + (\tilde{y}_j(s_1,s_j)-\tilde{y}_{j-1}(s_1,s_{j-1}))^2 &=  (s_j + s_{j-1})^2 ,\\
		(\tilde{x}_j(s_1,s_j)-\tilde{x}_{j+1}(s_1,s_{j+1}))^2 + (\tilde{y}_j(s_1,s_j)-\tilde{y}_{j+1}(s_1,s_{j+1}))^2 &=  (s_j + s_{j+1})^2 .
	\end{align*}
	By fixing $s_1=r_1$ and $s_{j \pm 1}=r_{j \pm 1}$, replacing the scalar values $\tilde{x}_{j\pm 1}(r_1,r_{j\pm 1}),\tilde{y}_{j\pm 1}(r_1,r_{j\pm 1})$ with the scalars $x_{j\pm 1},y_{j \pm 1}$ (as defined by $p$) and rearranging the equations,
	we have for every $t \in Z_j$ that
	\begin{align*}
		\tilde{x}_j(r_1,t)^2 + \tilde{y}_j(r_1,t)^2 - t^2 - 2t &= 0,\\		
		(\tilde{x}_j(r_1,t)-x_{j-1})^2 + (\tilde{y}_j(r_1,t)-y_{j-1})^2 - (t + r_{j-1})^2 &= 0,\\
		(\tilde{x}_j(r_1,t)-x_{j+1})^2 + (\tilde{y}_j(r_1,t)-y_{j+1})^2 - (t + r_{j+1})^2 &= 0.
	\end{align*}
	Hence the equations
	\begin{align}
		\label{eq:3sphere1} a^2 + b^2 - c^2 - 2c &= 0,\\
		\label{eq:3sphere2} (a - x_{j-1})^2 + (b - y_{j-1})^2 - (c + r_{j-1})^2 &= 0,\\
		\label{eq:3sphere3} (a - x_{j+1})^2 + (b - y_{j+1})^2 - (c + r_{j+1})^2 &= 0,
	\end{align}
	have infinitely many solutions $(a,b,c) \in \mathbb{R}^3$ (since $Z_j$ contains infinitely many points).
	By taking the Jacobian of \cref{eq:3sphere1,eq:3sphere2,eq:3sphere3} with respect to the variables $(a,b,c)$ and then dividing every row by 2, multiplying the right-most column by $-1$, and subtracting the top row from the bottom two rows,
	we obtain the matrix
	\begin{equation*}
		M (a,b,c):=
		\begin{bmatrix}
			a & b & c + 1 \\
			- x_{j-1} & - y_{j-1}  & r_{j-1} - 1 \\
			- x_{j+1} & - y_{j+1}  & r_{j+1} - 1
		\end{bmatrix}.
	\end{equation*}
	As \cref{eq:3sphere1,eq:3sphere2,eq:3sphere3} are polynomial equations with infinitely many solutions,
	$\det M(a,b,c) = 0$ for all $(a,b,c)\in \mathbb{R}^3$.
	Note that this occurs if and only if the vectors $v_{-} := (x_{j-1},y_{j-1},r_{j-1}-1)$ and $v_{+} := (x_{j+1},y_{j+1},r_{j+1}-1)$ are linearly dependent.
	Suppose that $v_-$ and $v_+$ are indeed linearly dependent.
	Since neither vector can have a zero as their third coordinate (as this would imply $r_{j \pm 1} =1$, contradicting that the set $\{r_1,\ldots,r_k\}$ is algebraically independent),
	there exists $\lambda \neq 0$ such that $x_{j+1} = \lambda x_{j-1}$, $y_{j+1} = \lambda y_{j-1}$ and $r_{j+1} = \lambda r_{j-1} - \lambda + 1$.
	By substituting our new values for $x_{j+1},y_{j+1},r_{j+1}$ into the equation defined in property \ref{item1} with $i=j+1$ of our constructed framework $(G,p)$,
	we see that
	\begin{equation*}
		0 = x_{j+1}^2 + y_{j+1}^2 - r_{j+1}^2 - 2r_{j+1} = \lambda^2 (x_{j-1}^2 + y_{j-1}^2 - r_{j-1}^2) + (2r_{j-1} - 1)\lambda^2 + (4 - 4r_{j-1}) \lambda -3
	\end{equation*}
	Using property \ref{item1} with $i=j-1$ of our constructed framework $(G,p)$,
	we obtain the equation
	\begin{equation*}
		(4r_{j-1} - 1)\lambda^2 + (4 - 4r_{j-1})\lambda - (4 r_{j-1} +3) = 0.
	\end{equation*}
	However this implies that $\lambda$ -- and hence also $r_{j+1}$ -- lies in the algebraic closure of $\mathbb{F}(r_{j-1})$,
	contradicting that the set $\{r_1,\ldots,r_k\}$ is algebraically independent.
	Hence the function $(a,b,c) \mapsto \det M (a,b,c)$ is a non-zero polynomial,
	contradicting our earlier claim that $\det M(a,b,c) = 0$. 
	This now concludes the proof.
\end{proof}

\begin{lemma}\label{l:main}
	Let $G=(V,E)$ and let $G \oplus K_2$ be the contact graph of a sphere packing $P$ with generic radii.
	Then $G$ is a forest.
\end{lemma}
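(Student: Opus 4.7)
The plan is to use Lemma 2.10 to put the packing into standard form, thereby reducing the 3D problem to a 2D one, and then invoke Lemma 3.1 to rule out cycles in $G$.

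First I would apply \Cref{l:correctradii} to obtain a Möbius-equivalent sphere packing $P' = (G \oplus K_2, p', r')$ where $p'_v = (x_v, y_v, 0)$ for each $v \in V$, $p'_a = (0,0,-1)$, $p'_b = (0,0,1)$, $r'_a = r'_b = 1$, and the set $\{r'_v : v \in V\}$ is algebraically independent. Since Möbius-equivalent sphere packings share the same contact graph, $P'$ still has contact graph $G \oplus K_2$. The tangency constraints translate cleanly into the 2D plane: tangency of $S_v$ with $S_a$ gives $x_v^2 + y_v^2 = r'^2_v + 2 r'_v$, and tangency of $S_v$ with $S_w$ for $vw \in E$ gives $(x_v - x_w)^2 + (y_v - y_w)^2 = (r'_v + r'_w)^2$. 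These are precisely the equations appearing in conditions (i) and (iii) of \Cref{l:specialframework}.

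Next I would assume for contradiction that $G$ contains a cycle, and take a shortest such cycle $C = (v_1, \ldots, v_k)$ with $k \geq 3$; by minimality, $C$ is chordless in $G$. A rotation about the $z$-axis (which preserves the standard form and all the algebraic data) lets me arrange $y_{v_1} = 0$ and $x_{v_1} > 0$. The remaining content of \Cref{l:specialframework} that must be checked is the angular ordering condition (ii): the vertices $v_1, \ldots, v_k$ must be encountered in order as one rotates clockwise around the origin. This is the crucial geometric input. Since the disks $D_i := \{q \in \mathbb{R}^2 : \|q - (x_{v_i}, y_{v_i})\| \leq r'_{v_i}\}$ are pairwise non-overlapping (the spheres are), since non-consecutive cycle vertices correspond to strictly disjoint disks (as $C$ is induced), and since no $D_i$ contains the origin (the closest point of $D_i$ to the origin lies at distance $\sqrt{r'^2_{v_i} + 2 r'_{v_i}} - r'_{v_i} > 0$), a Jordan-curve style argument applied to the closed chain of tangent disks forces the angular order of the centres around the origin to coincide (up to reversal) with the cycle order; reversing $C$ if necessary yields the required clockwise arrangement.

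With all the hypotheses of \Cref{l:specialframework} in place, and noting that $\|p'_{v_1} - p'_{v_k}\| = r'_{v_1} + r'_{v_k}$ automatically holds because $v_1 v_k$ is an edge of $C$, I conclude that $\{r'_{v_1}, \ldots, r'_{v_k}\}$ is algebraically dependent. This contradicts the algebraic independence of $\{r'_v : v \in V\}$ and completes the proof. The main obstacle I anticipate is the angular-ordering step: the intuition is clear from the picture, but turning ``non-overlapping tangent disks forming a chordless cycle around the origin'' into a rigorous combinatorial statement requires a careful topological argument.
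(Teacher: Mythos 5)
Your overall strategy mirrors the paper's: put the packing into standard form via \Cref{l:correctradii} so that all of $V$ lies in the plane $z=0$, extract the system of tangency equations, and invoke \Cref{l:specialframework}. You are in fact more careful than the paper's own proof, which asserts ``$(H,q)$ is the framework described in \Cref{l:specialframework}'' without addressing condition (ii); you correctly flag that the angular-ordering hypothesis needs justification. Unfortunately, the Jordan-curve argument you sketch does not establish it, and this is a genuine gap in the proposal.

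The specific claim --- that a chordless closed chain of pairwise non-overlapping tangent disks, none containing the origin, must have its centres in cyclic angular order around the origin --- is false. A counterexample already arises in this setting. Take the graph $G = C_6$ and realise it as a regular hexagon of six unit pennies centred at $(10,0)$; this is a chordless induced $6$-cycle in the penny graph sense, every penny centre is at distance at least $8>1$ from the origin, and yet the angular order of the centres as seen from the origin oscillates rather than traversing a full turn, since the whole chain lies on one side of the origin. Pulling this back through the Möbius transform $g(x) = 2x/\|x\|^2$ used in the proof of \Cref{t:main} gives a legitimate sphere packing $P$ with contact graph $C_6\oplus K_2$ in the standard form of \Cref{l:correctradii}, whose planar cycle of tangent circles sits entirely to one side of the origin and does not encircle it. Since Möbius transforms preserve whether a Jordan curve avoiding the centre of inversion winds around that centre, the pre-image chain also fails to encircle the origin, so condition (ii) fails even after reversing or re-rooting the cycle. (This packing is not generic, but that is exactly the assumption being used for contradiction; the contradiction has to be derivable before knowing the radii are dependent.)

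The gap is fillable, but not by the topological route you sketch. Inspecting the proof of \Cref{l:specialframework}, condition (ii) is only used to make the map $s\mapsto p(s)$ on the neighbourhood $D$ well-defined and continuous, i.e.\ to fix a consistent branch at each application of \Cref{l:implicit}. Any fixed assignment of signs $\varepsilon_i := \operatorname{sgn}\det\bigl[\,p_{v_i}\;\big|\;p_{v_{i+1}}\,\bigr]$ works equally well: one declares $D$ to be the set of $s$ for which a realisation with that particular sign pattern exists, and the rest of the argument is verbatim. So the clean repair is either to restate \Cref{l:specialframework} with ``there is a sign pattern $(\varepsilon_1,\dots,\varepsilon_k)\in\{\pm1\}^k$ such that \dots'' in place of the all-clockwise condition (ii), or, in the proof of \Cref{l:main}, to read off the sign pattern realised by the given packing $P$ and run the argument of \Cref{l:specialframework} relative to that pattern. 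Note that your step of passing to a shortest (hence chordless) cycle is not needed once this repair is made, although it is harmless.
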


\begin{proof}
	Suppose for contradiction that $G$ contains a cycle $C=(v_1,\ldots,v_k)$ for some $k \leq |V|-2$.	
	We first wish to reform $P$ into a more friendly form.
	By translating and rotating $P$ we may suppose that $x_{a}=x_b=y_{a}=y_b=0$, $z_{a} = -r_{a}$ and $z_b=r_b$.
	By applying \Cref{l:correctradii} we may replace $P$ with a M\"{o}bius-equivalent sphere packing where $r_{a}=1$, $r_{b} = 1$ and (in contrast to $r$ being generic) the set $\{r_v : v \in V \}$ is algebraically independent.
	Note that since every vertex is adjacent to both $a$ and $b$,
	we have for each $v \in V $ that $p_v = (x_v,y_v,0)$ for some $x_v,y_v$ where $x_v^2 + y_v^2 = r_v^2 + 2r_v$.
	Fix the set $U:= \{v_1,\ldots,v_k, a\}$ and fix $q:U \rightarrow \mathbb{R}^2$ to be the map where $q_v = (x_v,y_v)$ for each $v\in  U$.
	Define $H$ to be the subgraph of $G$ with vertex set $U$ and edge set $\{v_1 v_2, \ldots, v_{k-1} v_k, v_k v_1\} \cup \{ a v_i : 1 \leq i \leq k\}$.
	Importantly,
	$H$ contains the cycle $C$.
	By rotating $P$ we may suppose that $y_{v_1}=0$.
	However we now have a contradiction;
	$(H,q)$ is the framework described in \Cref{l:specialframework},
	hence the set $\{r_v:v \in V \}$ is not algebraically independent.
\end{proof}

\subsection{Stress-free property of the sphere packing}

Using our new knowledge of the structure of the contact graph from \Cref{l:main},
we next prove that any such sphere packings are stress-free.
To do so we need the following folk-lore result from rigidity theory.

\begin{lemma}[{\cite[Lemma 11.1.1]{w96}}]\label{l:0ext}
	Suppose $(G,p)$ and $(G',p')$ are frameworks in $\mathbb{R}^d$ where the following hold:
	\begin{enumerate}
		\item $G'=(V',E')$ is a subgraph of $G=(V,E)$ and $V = V' \cup \{v_0\}$,
		\item $p'_v = p_v$ for all $v \in V'$,
		\item $v_0$ has exactly $k \leq d$ neighbours $v_1,\ldots,v_k$ in $G$, and
		\item the points $p_{v_0}, \ldots, p_{v_k}$ are not contained in a $(k-1)$-dimensional affine subspace of $\mathbb{R}^d$.
	\end{enumerate}
	Then $(G,p)$ is stress-free if and only if $(G',p')$ is stress-free.
\end{lemma}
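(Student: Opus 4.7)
The plan is a direct two-direction argument from the definition of equilibrium stress, with a single short linear-algebra observation at its core. First I would recast hypothesis (iv): since $k \leq d$, the statement that $p_{v_0},\ldots,p_{v_k}$ do not lie in a $(k-1)$-dimensional affine subspace of $\mathbb{R}^d$ is equivalent to the $k$ vectors $p_{v_0}-p_{v_1},\ldots,p_{v_0}-p_{v_k}$ being linearly independent in $\mathbb{R}^d$. This is the only conceptual content; everything after it is bookkeeping.

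For the ``only if'' direction, I would take an arbitrary equilibrium stress $\sigma'$ of $(G',p')$ and extend it to $\sigma: E \rightarrow \mathbb{R}$ by declaring $\sigma_{v_0 v_i} := 0$ for each $i \in \{1,\ldots,k\}$. The equilibrium sum at $v_0$ is then trivially zero, while at each $v \in V'$ it coincides with the corresponding sum for $\sigma'$, since the edges incident to $v_0$ (if any contribute at $v$) carry zero stress. Hence $\sigma$ is an equilibrium stress of $(G,p)$, and stress-freeness of $(G,p)$ forces $\sigma = 0$; restricting back gives $\sigma' = 0$.

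For the ``if'' direction, I would start with an arbitrary equilibrium stress $\sigma$ of $(G,p)$ and write out the equilibrium equation at $v_0$:
\[
\sum_{i=1}^{k} \sigma_{v_0 v_i}\,(p_{v_0} - p_{v_i}) = 0.
\]
By the linear independence noted in the first step, this forces $\sigma_{v_0 v_i} = 0$ for every $i \in \{1,\ldots,k\}$. Consequently, the restriction $\sigma|_{E'}$ is an equilibrium stress of $(G',p')$, since at each $v \in V'$ the only edges that might have been dropped incident to $v_0$ now carried zero stress, leaving the equilibrium sum unchanged. Stress-freeness of $(G',p')$ then yields $\sigma|_{E'} = 0$, which together with the previous sentence gives $\sigma = 0$.

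The only point requiring any care is the affine-versus-linear independence equivalence in the first step, but this is the standard fact that a set $\{q_0,\ldots,q_k\}$ in $\mathbb{R}^d$ is affinely independent if and only if the differences $\{q_0 - q_i : i = 1,\ldots,k\}$ are linearly independent, and the condition $k \leq d$ ensures that such a $k$-element linearly independent set can in fact be accommodated in $\mathbb{R}^d$. I do not anticipate any genuine obstacle in the proof.
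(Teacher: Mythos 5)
Your proof is correct. The paper does not prove this lemma at all --- it is stated as a folklore result with a citation to Whiteley's Lemma 11.1.1 --- so there is nothing internal to compare against; your two-direction argument (extend a stress of $(G',p')$ by zero on the new edges for one implication, and for the other use that condition (iv) is exactly affine independence of $p_{v_0},\ldots,p_{v_k}$, hence linear independence of the differences $p_{v_0}-p_{v_i}$, to force $\sigma_{v_0v_i}=0$ before restricting) is precisely the standard proof the citation stands in for.
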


\begin{lemma}\label{l:stressfree}
	Let $G=(V,E)$ and let $G \oplus K_2$ be the contact graph of a sphere packing $P$ with generic radii.
	Then $P$ is stress-free.
\end{lemma}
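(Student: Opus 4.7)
The plan is to prove this by induction on $|V|$, peeling off a leaf of the forest $G$ at each step and invoking Lemma \ref{l:0ext} to descend to a smaller packing. By Lemma \ref{l:main}, $G$ is a forest; if $V = \emptyset$, then $G \oplus K_2 = K_2$ and the associated framework is two distinct points joined by a single edge in $\mathbb{R}^3$, which is trivially stress-free. For the inductive step, pick a vertex $v$ of $G$ of degree at most one in $G$, which therefore has degree at most three in $G \oplus K_2$. Deleting $S_v$ produces a sub-packing $P'$ whose contact graph is $(G - v) \oplus K_2$, again of the required form with $G - v$ still a forest, and whose radii (a restriction of an algebraically independent set) remain algebraically independent. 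The inductive hypothesis gives that $P'$ is stress-free, and Lemma \ref{l:0ext} will promote this to $P$ provided the appropriate non-degeneracy condition on the centres of the neighbours of $v$ holds.

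If $v$ is isolated in $G$ its neighbours in $G \oplus K_2$ are exactly $a$ and $b$, and we need only that $p_v, p_a, p_b$ are non-collinear; this is automatic since three mutually tangent spheres with pairwise disjoint interiors have centres forming a non-degenerate triangle. If instead $v$ has a unique $G$-neighbour $w$, its neighbours in $G \oplus K_2$ are $a, b, w$, and we must rule out that $p_v, p_a, p_b, p_w$ lie in a common plane $\pi$. Suppose for contradiction they do; intersecting each of $S_v, S_a, S_b, S_w$ with $\pi$ yields four mutually tangent circles of radii $r_v, r_a, r_b, r_w$ bounding disks with pairwise disjoint interiors in $\pi$. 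The Descartes circle theorem then forces
\[
(k_v + k_a + k_b + k_w)^2 = 2(k_v^2 + k_a^2 + k_b^2 + k_w^2), \qquad k_u := 1/r_u,
\]
and clearing denominators by $r_v^2 r_a^2 r_b^2 r_w^2$ yields a polynomial identity over $\mathbb{Q}$ in $r_v, r_a, r_b, r_w$ which is readily checked to be non-trivially zero (for instance, the substitution $r_v = r_a = r_b = r_w = 1$ fails it). This contradicts the algebraic independence of $\{r_u : u \in V \cup \{a, b\}\}$, so the four centres are non-coplanar and Lemma \ref{l:0ext} applies to close the induction.

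The main obstacle is the non-coplanarity step in the pendant-leaf case, as this is the one point at which we need to convert the genericity hypothesis into a concrete algebraic constraint on the geometry; all of the combinatorial and inductive scaffolding is routine once that step is secured. The Descartes circle theorem supplies exactly the right identity in a single line, but if one wished to avoid citing it one could alternatively restrict to the plane $\pi$ and apply the algebraic-function machinery of Section \ref{sec:algfun} (parallel to the argument in the proof of Lemma \ref{l:main}) to manufacture a polynomial relation among the four radii.
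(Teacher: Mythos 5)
Your proposal is correct, and its inductive skeleton (peel a leaf of the forest, invoke \Cref{l:0ext}, check the non-degeneracy hypothesis) is identical to the paper's. The one genuine divergence is the crucial non-coplanarity step for a pendant vertex $v$ with $G$-neighbour $w$. The paper intersects the four spheres with the putative common plane to obtain a circle packing with contact graph $K_4$ whose radii $\{r_v,r_w,r_a,r_b\}$ are still algebraically independent, and then derives a contradiction from \Cref{t:genconj2}, since a generic radii circle packing on $4$ vertices can have at most $2\cdot 4-3=5<6$ contacts. You instead apply the Descartes circle theorem to the same planar configuration of four mutually externally tangent circles, obtaining the explicit rational relation $(\sum_u r_u^{-1})^2 = 2\sum_u r_u^{-2}$, which after clearing denominators is a nonzero polynomial over $\mathbb{Q}$ (as your substitution $r_u\equiv 1$ confirms) vanishing on algebraically independent radii --- a contradiction. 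Both arguments are valid; yours is more self-contained in that it replaces the full strength of the Connelly--Gortler--Theran theorem by a classical one-line identity that directly exhibits the forbidden algebraic dependence, while the paper's is arguably more uniform in style, since \Cref{t:genconj2} is already part of its toolkit and the same ``project to a plane and count edges'' reduction recurs elsewhere (e.g.\ in \Cref{cor:chordal}). Your fallback suggestion of rederiving the relation via the algebraic-function machinery of \Cref{sec:algfun} is unnecessary given either route.
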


\begin{proof}
	Fix $p:V \cup \{a,b\} \rightarrow \mathbb{R}^3$ and $r:V \cup \{a,b\} \rightarrow \mathbb{R}_{>0}$ map the vertices of $G$ to the centres and radii for their corresponding sphere respectively.
	By applying rotations and translations to $P$ we may suppose that $p_a = (0,0,-r_a)$ and $p_b = (0,0,r_b)$.
	
	We now proceed with an inductive proof.
	If $|V|=0$ then $P$ is stress-free since $p_a \neq p_b$.
	Suppose instead that the result holds for any $(k-1)$ vertex graph and fix $|V| = k$.
	By \Cref{l:main},
	$G$ is a forest.
	Hence there exists a vertex $v \in V$ with degree 1 or less in $G$,
	and so with degree 3 or less in $G \oplus K_2$.
	Fix $P'$ to be the generic radii sphere packing formed by removing the sphere for $v$ from $P$,
	and fix $p',r'$ to be its centre and radii of $P'$ respectively.
	Since $G-v$ is a forest and $Q$ has contact graph $(G-v) \oplus K_2$,
	$Q$ (and hence $((G-v) \oplus K_2,p')$) is stress-free by our inductive assumption.
	The triple $\{p_v,p_a,p_b\}$ is not colinear,
	indeed if they were then one of the spheres would be required to have radius 0 or an overlap would occur.
	If $v$ is not adjacent to any vertices in $G$ then $(G,p)$ (and hence $P$) is stress-free by \Cref{l:0ext}.
	Suppose $v$ is adjacent to a vertex $w$ in $G$.
	If the points $\{p_v,p_w,p_a,p_b\}$ are not coplanar then $(G,p)$ (and hence $P$) is stress-free by \Cref{l:0ext}.
	It now suffices to prove that the points $\{p_v,p_w,p_a,p_b\}$ are not coplanar.
	
	Suppose for contradiction that the points $\{p_v,p_w,p_a,p_b\}$ are coplanar.
	By rotating $P$ around the $z$-axis, we may suppose that $y_v=y_w=0$ also.
	Fix $H$ to be the subgraph of $G$ induced by the vertex set $U:= \{v,w,a,b\}$, and fix $q:U \rightarrow \mathbb{R}^2$ and $s:U \rightarrow \mathbb{R}_{>0}$ to be the maps where $q_u = (x_u,z_u)$ and $s_u=r_u$ for each $u\in  U$.
	Then $(H,q,s)$ is a generic radii circle packing.
	However $H$ has $2|U|-2 > 2|U|-3$ edges,
	contradicting \Cref{t:genconj2}.
	This now concludes the proof.
\end{proof}

\subsection{Proving \texorpdfstring{\Cref{t:main}}{main theorem} and corollaries}

We are now ready to prove \Cref{t:main}.

\begin{proof}[Proof of \Cref{t:main}]
	Following from \Cref{t:penny},
	we suppose throughout the proof that $G$ is a penny graph.	
	If $G \oplus K_2$ is the contact graph of a sphere packing with generic radii then $G$ is a forest by \Cref{l:main},
	and $P$ is stress-free by \Cref{l:stressfree}.
	
	Now let $G$ be a forest.
	Let $X$ be the set of all penny graph realisations of $G$ with no vertex within distance 1 of the origin,
	i.e., maps $q:V \rightarrow \mathbb{R}^2$ where $\|q_v\|>1$ for each $v \in V$ and $\|q_v-q_w\| \geq 1$ for all distinct $v,w \in V$ with equality if and only if $vw \in E$.
	The derivative of the smooth map
	\begin{equation*}
		f_G : (\mathbb{R}^2)^V \rightarrow \mathbb{R}^E, ~ p \mapsto \left( \|p_v-p_w\|^2 \right)_{vw \in E}
	\end{equation*}
	at $p \in (\mathbb{R}^2)^V$ is a surjective $|E| \times 2|V|$ matrix if $\|p_v-p_w\| \neq 0$ for all $vw \in E$,
	and so the set $f_G^{-1}(1,\ldots,1)$ is a smooth manifold by the constant rank theorem (see for example \cite[Theorem 9.32]{rudin}).
	Hence the set $X$ -- an open subset of $f_G^{-1}(1,\ldots,1)$ -- is a smooth manifold.
	Define the smooth map
	\begin{equation*}
		f : X \rightarrow \mathbb{R}_{>0}^V, ~ p \mapsto \left( -1 + \sqrt{1 + \frac{4}{\|p_v\|^2}} ~ \right)_{v \in V}.
	\end{equation*}
	For any $p \in X$,
	the derivative $df(q)$ of $f$ at $p$ is a $|V| \times 2|V|$ surjective matrix.
	Hence the set $f(X)$ has an open interior by the constant rank theorem.
	
	Define the set
	\begin{equation*}
		Y := \Big\{ (p,r) : (G \oplus K_2,p,r) \text{ is a sphere packing}, p_a = (0,0,-1),p_b=(0,0,1),r_a=r_b=1   \Big\},
	\end{equation*}
	i.e., the set of sphere packings with contact graph $G \oplus K_2$ where the spheres corresponding to $a,b$ -- now fixed to be $S_a,S_b$ respectively -- have radius 1 and centres $(0,0,-1),(0,0,1)$ respectively. 
	Fix $g : \mathbb{R}^3 \dashrightarrow \mathbb{R}^3$ to be the M\"{o}bius transform where $g(x) = 2x/\|x\|^2$,
	and define $Z \subset \mathbb{R}^3 \times \mathbb{R}_{>0}$ to be the set of all spheres in contact with both $S_a$ and $S_b$.
	The map $g$ sends the spheres $S_a, S_b$ to the affine planes $\{ (x,y,-1) : x,y \in \mathbb{R}\}$ and $\{ (x,y,1) : x,y \in \mathbb{R}\}$ respectively.
	Hence the map $g$ takes any sphere $S$ that is in contact with both the spheres $S_a,S_b$,
	and outputs a unit sphere with centre $(x,y,0)$ for some $x,y$ satisfying $x^2 +y^2 > 1$.
	Given $\mathbb{D} := \{(x,y) : x^2+y^2 \leq 1\}$,
	define $k: Z \rightarrow \mathbb{R}^2 \setminus \mathbb{D}$ to be the bijective map which takes any sphere $S$ in contact with both $S_a,S_b$ with centre $(x,y,z)$ and outputs the point $(x',y') \in \mathbb{R}^2$ where $g(x,y,z) = (x',y',0)$;
	in essence, the map $k$ is mapping any sphere $S \in Z$ to a unit circle $C$ in $\mathbb{R}^2$ which does not contain the point $(0,0)$ inside its interior.
	Now define the map 
	\begin{equation*}
		h:Y \rightarrow X, ~ (p,r) \mapsto (k(p_v,r_v))_{v \in V},
	\end{equation*}
	and note that $h$ is bijective since $k$ is bijective.
	By our choice of map $f$ we have $f \circ h(p,r) = r$,
	and so $f(X) = \radii_{a=b=1}(G \oplus K_2)$.
	Thus $\radii(G \oplus K_2)$ has non-empty interior by \Cref{l:openset}.
	This now guarantees the existence of a sphere packing with contact graph $G \oplus K_2$ and generic radii.
\end{proof}

We conclude the section with a few corollaries of \Cref{t:main}.
The maximal clique size for the contact graph of a sphere packing is 5, and this bound is tight;
this follows from combining \Cref{t:penny} with the observation that $K_3$ is a penny graph but $K_4$ is not.
The following result is an easy corollary of \Cref{t:main} and the observation that $K_n = K_{n-2} \oplus K_2$ for every $n \geq 3$.

\begin{corollary}\label{cor:k5}
	The maximal clique size for the contact graph of a generic radii sphere packing is 4, and this bound is tight.
\end{corollary}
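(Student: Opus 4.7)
The plan is to derive Corollary~\ref{cor:k5} as a direct bookkeeping consequence of Theorem~\ref{t:main}, using the elementary identity $K_n = K_{n-2} \oplus K_2$ for every $n \geq 3$ (interpreting $K_0$ as the empty graph and $K_1$ as a single vertex). The argument splits into an upper bound ruling out cliques of size $5$ and a construction showing that cliques of size $4$ are attained.

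For the upper bound, I would argue by contradiction. Suppose some generic radii sphere packing $P$ has a contact graph containing $K_5$ as a subgraph. Restricting $P$ to the five vertices of this clique yields a sub-packing $P'$; the induced contact graph is exactly $K_5$, because each pair among those five spheres is tangent. Since the radii of $P'$ form a subset of the algebraically independent radii of $P$, they are themselves algebraically independent, so $P'$ has generic radii. Writing $K_5 = K_3 \oplus K_2$ and invoking Theorem~\ref{t:main}, we would need $K_3$ to be both a forest and a penny graph. However $K_3$ contains a triangle and is therefore not a forest, a contradiction. Hence no generic radii sphere packing has a $K_5$ in its contact graph.

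For tightness, I would apply the identity $K_4 = K_2 \oplus K_2$. The graph $K_2$ is trivially a forest and is obviously a penny graph (two unit disks meeting in a single point). Theorem~\ref{t:main} then guarantees the existence of a generic radii sphere packing whose contact graph is exactly $K_4$, so the maximal clique size $4$ is indeed achieved.

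There is no substantive obstacle: once the identity $K_n = K_{n-2} \oplus K_2$ is observed, both directions are short. The only minor care required is verifying that restricting a generic radii packing to a subset of vertices preserves genericity and that the induced contact graph of the sub-packing is the induced subgraph of the original contact graph, both of which are immediate from the definitions.
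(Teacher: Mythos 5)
Your proposal is correct and follows exactly the route the paper intends: the identity $K_n = K_{n-2}\oplus K_2$ together with \Cref{t:main}, with $K_3$ failing to be a forest giving the upper bound and $K_2$ being both a forest and a penny graph giving tightness. The paper leaves the sub-packing/genericity-preservation step implicit, but your spelling it out is the same argument, not a different one.
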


We recall that a graph is \emph{chordal} if every cycle of the graph that is an induced subgraph has length 3.

\begin{corollary}\label{cor:chordal}
	Let $P$ be a sphere packing with generic radii and contact graph $G$.
	If $G$ is chordal then $P$ is stress-free.
\end{corollary}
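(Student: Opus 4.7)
My plan is to reduce the claim to an iterated application of the $0$-extension lemma (\Cref{l:0ext}) via a perfect elimination ordering. Since $G$ is chordal, its vertices admit an ordering $v_1,\ldots,v_n$ such that for every $i$, the later neighbourhood $N^+(v_i) := N_G(v_i) \cap \{v_{i+1},\ldots,v_n\}$ induces a clique in $G$. Because $\{v_i\} \cup N^+(v_i)$ is itself a clique and, by \Cref{cor:k5}, every clique in the contact graph of a generic radii sphere packing has at most $4$ vertices, we obtain $|N^+(v_i)| \leq 3$ for every $i$.

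With this ordering in hand, I would prove by downward induction on $i$ that the sub-framework of $(G,p)$ induced by $\{v_i, v_{i+1}, \ldots, v_n\}$ is stress-free. The base case $i = n$ is trivial. For the inductive step, I would apply \Cref{l:0ext} in dimension $d = 3$, with $v_0 := v_i$ and $k := |N^+(v_i)| \leq 3 = d$. All hypotheses of \Cref{l:0ext} are immediate except the general position condition on the centres $\{p_{v_i}\} \cup \{p_{v_j} : v_j \in N^+(v_i)\}$, and once this is verified the induction terminates at $i=1$ with $(G,p)$, and hence $P$, stress-free.

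The principal obstacle is therefore to check this general position condition, and this is where genericity of the radii enters. Since $\{v_i\} \cup N^+(v_i)$ is a clique in the contact graph, the corresponding spheres are pairwise tangent, so every pairwise distance between these centres has the form $r_u + r_w$. For $k = 1$ the condition merely asserts that two distinct tangent spheres have distinct centres; for $k = 2$ it follows from the strict triangle inequality applied to side-lengths of the form $r_u + r_w$ with positive radii; and for $k = 3$ it asks that four pairwise tangent spheres have non-coplanar centres. I would handle the last case by observing that coplanarity is equivalent to vanishing of the Cayley--Menger determinant whose entries are the squared distances $(r_u + r_w)^2$. This determinant is a non-zero polynomial in the four radii (it is non-zero, for instance, at the equal-radii regular-tetrahedron configuration), hence it cannot vanish on any algebraically independent tuple of radii. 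Thus the genericity hypothesis rules out the only possible degeneracy, and the inductive step goes through.
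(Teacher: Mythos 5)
Your proposal is correct, and it follows the same overall structure as the paper's proof: both reduce the claim to repeated applications of \Cref{l:0ext} by peeling off simplicial vertices (equivalently, running a perfect elimination ordering in reverse), and both invoke \Cref{cor:k5} to bound the clique size by $4$, so that each peeled vertex has degree at most $3$ in the remaining graph. The one place where your argument genuinely diverges from the paper's is in verifying the affine-independence hypothesis of \Cref{l:0ext}. The paper handles this by the same device as in \Cref{l:stressfree}: if the centres of the clique were affinely dependent, one could slice by the common plane to obtain a generic-radii circle packing whose contact graph contains $K_4$, contradicting \Cref{t:genconj2}. You instead observe that all pairwise distances within the clique have the form $r_u + r_w$, dispose of the cases $k \leq 2$ by the strict triangle inequality, and for $k = 3$ note that coplanarity of the four centres is the vanishing of the Cayley--Menger determinant, a polynomial in the four radii with rational coefficients that is nonvanishing at $(1,1,1,1)$, hence cannot vanish on an algebraically independent tuple. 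Your route is more elementary and more self-contained for this corollary, since it avoids invoking the Connelly--Gortler--Theran theorem (\Cref{t:genconj2}) and uses only the definition of algebraic independence; the paper's route has the advantage of reusing machinery already set up for \Cref{l:stressfree}, and (unlike the Cayley--Menger calculation) extends verbatim to degeneracies not expressible purely in terms of the radii. Both are valid.
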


\begin{proof}
	Without loss of generality we may suppose $G$ is connected.	
	The result is immediately true if $|V|=1$.
	Suppose the result holds for any sphere packing with generic radii and chordal contact graph with less than $|V|$ vertices.
	By \Cref{cor:k5}, the maximal clique size of $G$ is 4 or less.
	Hence $G$ contains a vertex $v_0$ with neighbours $U \subset V$ such that $U$ is a clique of size at most 3.
	Fix $p:V \rightarrow \mathbb{R}^3$ and $r:V \rightarrow \mathbb{R}_{>0}$ to be the centres and radii of $P$ respectively.
	Following from our inductive hypothesis,
	the restriction of the framework $(G,p)$ to the vertices $V \setminus \{v_0\}$ is stress-free.
	By \Cref{l:0ext}, it is now sufficient to prove that the points $S := \{p_v: v \in U \cup \{v_0\}\}$ are affinely independent,
	which can be done using a similar technique as that presented in \Cref{l:stressfree}.
\end{proof}

We recall that a tree is a \emph{caterpillar graph} if a path remains
after all degree 1 vertices have been removed.

\begin{corollary}\label{cor:caterpillar}
	Let $G$ be a caterpillar graph.
	Then the following statements are equivalent.
	\begin{enumerate}
		\item \label{cor:caterpillar1} $G \oplus K_2$ is the contact graph of a sphere packing with generic radii.
		\item \label{cor:caterpillar2} The maximum degree of $G$ is 5, and any path between two degree 5 vertices must contain a vertex of degree 3 or less.
	\end{enumerate}
\end{corollary}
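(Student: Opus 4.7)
The plan is to reduce the statement, via \Cref{t:main}, to a purely geometric question: which caterpillars are penny graphs? Since every caterpillar is a forest, \Cref{t:main} shows that (\ref{cor:caterpillar1}) is equivalent to $G$ being a penny graph, so it suffices to prove that a caterpillar $G$ is a penny graph if and only if (\ref{cor:caterpillar2}) holds.

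For (\ref{cor:caterpillar1}) $\Rightarrow$ (\ref{cor:caterpillar2}), the bound on maximum degree is immediate: no tree penny graph can contain a vertex of degree at least 6, since the six neighbouring unit discs would be forced into a regular hexagonal configuration and therefore create contacts (and hence a 3-cycle) between consecutive neighbours. For the path condition, I would argue by contradiction: assume $u, v$ are two degree-5 vertices whose connecting spine path $u = w_0, w_1, \dots, w_m = v$ has every intermediate vertex of degree at least 4. Realise $G$ as a penny graph and normalise so that $u = (0,0)$ and $w_1 = (2,0)$. A direct squared-distance computation yields the key identity
\begin{equation*}
	\| 2(\cos\theta,\sin\theta) - ((2,0) + 2(\cos\phi,\sin\phi))\|^2 - 4 = 8\sin\!\bigl(\tfrac{\theta-\phi}{2}\bigr)\sin\!\bigl(\tfrac{\theta}{2}\bigr)\cos\!\bigl(\tfrac{\phi}{2}\bigr),
\end{equation*}
so a pair of unit discs tangent to the discs at $(0,0)$ and $(2,0)$ at respective angles $\theta$ and $\phi$ are non-overlapping exactly when $\sin(\tfrac{\theta-\phi}{2})\sin(\tfrac{\theta}{2})\cos(\tfrac{\phi}{2}) \ge 0$, which in the upper half-plane (both angles in $(0°,180°)$) reduces to $\theta \ge \phi$ and in the lower half-plane to $\theta \le \phi$. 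At each spine vertex $w$ of degree $d$ the $d$ cyclic angular gaps around $w$ strictly exceed $60°$ and sum to $360°$, giving a total angular slack of $360° - 60d$: this equals $60°$ at a degree-5 vertex and $120°$ at a degree-4 vertex. Writing the consecutive slacks at $u$ as $a_0,\dots,a_4 > 0$ with $\sum a_i = 60°$ and similarly $b_0,\dots,b_4 > 0$ at $v$, iterating the non-overlap inequality edge by edge along the spine and passing through each intermediate vertex yields, after chaining in the upper half-plane, $a_0 + b_0 \ge 60°$, and in the lower half-plane, $a_4 + b_1 \ge 60°$. Summing gives $a_0 + a_4 + b_0 + b_1 \ge 120°$, which contradicts $a_0 + a_4 < 60°$ and $b_0 + b_1 < 60°$.

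For (\ref{cor:caterpillar2}) $\Rightarrow$ (\ref{cor:caterpillar1}), I would construct the penny graph realisation explicitly. Lay out the spine $w_0, \dots, w_m$ as tangent unit discs along a polyline with consecutive centres at distance 2; choose the turn angle at each degree-5 spine vertex to lie strictly inside $(0°,120°)\setminus\{60°\}$ in absolute value (as required by the local degree-5 constraint), and use a vertex of degree $\le 3$ guaranteed by (\ref{cor:caterpillar2}) between any two degree-5 spine vertices to reset the spine direction so that the leaves of the two degree-5 vertices do not interfere. At each spine vertex, place its leaves in the complementary angular arcs of the surrounding circle of radius 2, maintaining all angular separations strictly greater than $60°$; the degree bound $d \le 5$ ensures enough angular room.

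The principal difficulty is the careful propagation of the upper- and lower-half-plane non-overlap inequalities through an arbitrarily long $u$-to-$v$ spine path in (\ref{cor:caterpillar1}) $\Rightarrow$ (\ref{cor:caterpillar2}), and verifying the absence of unwanted tangencies in the explicit construction of (\ref{cor:caterpillar2}) $\Rightarrow$ (\ref{cor:caterpillar1}); the two-vertex case $m = 1$ captures the essential angular obstruction and motivates the general chain of inequalities.
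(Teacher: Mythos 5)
Your first step is exactly the paper's: since every caterpillar is a forest, \Cref{t:main} reduces the corollary to the claim that a caterpillar is a penny graph if and only if condition (ii) holds. At that point the paper simply cites this characterisation as a known result (\cite[Lemma 1]{KNP}) and stops. You instead attempt to prove the characterisation from scratch, so the real question is whether your geometric argument is complete. It is not. The degree bound ($\Delta(G)\le 5$, since six neighbours of a unit disc force a $60^\circ$-tight hexagonal ring and hence a triangle) is fine, and your tangency identity is correct up to a constant (the right-hand side should be $32\sin(\tfrac{\theta-\phi}{2})\sin(\tfrac{\theta}{2})\cos(\tfrac{\phi}{2})$, though only the sign matters). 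But the entire content of the hard direction is the sentence ``iterating the non-overlap inequality edge by edge along the spine and passing through each intermediate vertex yields \dots $a_0+b_0\ge 60^\circ$ \dots and $a_4+b_1\ge 60^\circ$.'' This is asserted, not derived. To make it work you must (a) set up a consistent angular frame along a spine that may turn at every vertex, (b) show precisely how the slack budget of $120^\circ$ at a degree-4 intermediate vertex is split between the ``upper'' and ``lower'' chains and why neither chain's inequality degrades as it passes through, and (c) handle the base case $m=1$ by an explicit case analysis of how the $4+4$ extra neighbours distribute between the two half-planes. None of this is routine bookkeeping; it is the proof.

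The converse direction has a parallel gap: a penny realisation must avoid overlaps between \textbf{all} pairs of discs, not just between a disc and its spine-neighbours' leaves, so your construction needs an argument that the polyline spine (with its turns at degree-5 vertices) never brings two distant portions of the caterpillar within distance $2$ of each other; ``reset the spine direction'' does not by itself guarantee this. If you want a self-contained proof you must close both gaps; otherwise the efficient route is the paper's, namely to quote \cite[Lemma 1]{KNP} and let \Cref{t:main} do the rest.
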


\begin{proof}
	A caterpillar graph is a penny graph if and only if condition \ref{cor:caterpillar2} holds \cite[Lemma 1]{KNP}.
	Equivalence now follows from \Cref{t:main}.
\end{proof}

\section{Complexity}

It follows from \Cref{t:main} that the problem of determining if a graph is the contact graph of a sphere packing with generic radii is as complex as the problem of determining if a tree is a penny graph.
It is believed that the problem of determining whether a tree is NP-Hard \cite{BDLRST}.
There is evidence to support this:
determining if a graph is a penny graph is NP-Hard \cite{BK96},
and determining if a tree with a fixed orientation is a penny graph is NP-Hard \cite{BDLRST}.
Because of this, we conjecture the following.

\begin{conjecture}\label{conj:complex}
	Determining whether a graph is the contact graph of a generic radii sphere packing is NP-Hard.
\end{conjecture}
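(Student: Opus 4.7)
The plan is to establish NP-hardness by a polynomial-time many-one reduction from the problem of deciding whether a given tree is a penny graph, which I shall denote TREE-PENNY. By \Cref{t:main}, a graph of the form $G \oplus K_2$ is the contact graph of a generic radii sphere packing if and only if $G$ is both a forest and a penny graph, so sending an input tree $T$ to the graph $H := T \oplus K_2$ yields a polynomial-time reduction from TREE-PENNY to the decision problem in \Cref{conj:complex}: the image $H$ is the contact graph of a generic radii sphere packing precisely when $T$ is a penny graph, since $T$ is automatically a forest. The reduction itself is trivial to compute, and its correctness is immediate from \Cref{t:main}; in particular, no further rigidity or packing argument is required on the reduction side.

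The main obstacle is that TREE-PENNY is not itself known to be NP-hard; this is the outstanding conjecture from \cite{BDLRST} that the excerpt alludes to. A natural approach to resolving it would be to reduce from the known NP-hard problem of deciding whether a tree with a \emph{fixed} combinatorial orientation is a penny graph \cite{BDLRST}. The technical challenge will be to encode a fixed cyclic ordering of neighbours purely in terms of the unlabelled tree structure of a suitably augmented tree $T'$, for instance by attaching small forcing gadgets (paths of prescribed length, or small high-degree subtrees) whose unit-distance realisations in the plane are unique up to global isometry and therefore pin down the combinatorial embedding of the original tree. Such gadgets will need to be simultaneously rigid enough to rule out every embedding that contradicts the target orientation, and non-interfering in the sense of introducing no new unit-distance coincidences elsewhere in $T'$; finding a single gadget family that achieves both is, I expect, where essentially all of the difficulty lives.

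If this gadget strategy proves elusive, an alternative is to aim for a direct NP-hardness reduction from a standard problem such as 3-SAT or PLANAR-3-SAT into the sphere packing contact graph problem, bypassing TREE-PENNY entirely. This route appears substantially harder, because \Cref{t:main} only characterises generic radii sphere packing contact graphs within the very restricted family $G \oplus K_2$, and a direct reduction seems to require further structural theorems of comparable strength outside this family, together with a much finer understanding of how generic radii interact with arbitrary sphere packing topologies. For this reason I expect the conditional route through TREE-PENNY to be the more promising in practice, with the substantive mathematical content concentrated in the orientation-removal gadget construction.
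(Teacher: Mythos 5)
The statement you are asked to prove is, in the paper, a \emph{conjecture}: the author does not prove it, and explicitly leaves it open. The paper's own discussion is exactly the conditional reduction you identify. It observes, via \Cref{t:main}, that $T \mapsto T \oplus K_2$ reduces ``is this tree a penny graph?'' to ``is this graph the contact graph of a generic radii sphere packing?'', cites \cite{BDLRST} for the belief (not a theorem) that recognising penny-graph trees is NP-hard, and then states that proving this belief would suffice to establish \Cref{conj:complex}. Your first paragraph reproduces that reduction faithfully and correctly.

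Where you go further is in honestly flagging the gap --- that TREE-PENNY is not known to be NP-hard --- and then sketching two speculative routes (gadgets to strip the fixed orientation from the oriented-tree problem of \cite{BDLRST}, or a direct reduction from SAT). Neither route is carried out, and neither appears in the paper, so your submission is a roadmap rather than a proof; but it is an accurate roadmap, and the missing piece you name is precisely the missing piece the paper names. In short: you have correctly identified that the statement is conditional on an open problem, reproduced the paper's reduction, and added plausible but unproven speculation about how to close the remaining gap. No step you wrote is wrong; the argument is simply incomplete in the same place the paper's is.
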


To prove \Cref{conj:complex}, it would be sufficient to prove that the problem of determining whether a tree is NP-Hard.

\subsection*{Acknowledgements}
I would like to thank both Ethan Lee and Ross Paterson for their valuable discussions about algebraic functions and algebraic independence.
I would also like to thank Meera Sitharam for sharing her knowledge regarding circle and sphere packings, and Steven J.~Gortler for his helpful feedback on an earlier version of this paper.
Finally,
I would like to thank the anonymous reviewer who helped correct some of the technical lemmas in \Cref{sec:algfun} with their solid suggestions.

The author was supported by the Heilbronn Institute for Mathematical Research.

\begin{appendix}

\section{Algebraic functions}\label{app:algebraic functions}

This section contains some classical results regarding algebraic functions.

\begin{lemma}\label{l:algcoeff}
	Let $\mathbb{F}$ be a subfield of $\mathbb{R}$, let $D \subset \mathbb{R}^n$ be an open set and let $f:D \rightarrow \mathbb{R}$ be a continuous function.
	Suppose there exists algebraic functions $a_0,\ldots,a_m :D \rightarrow \mathbb{R}$ over $\mathbb{F}$ such that $a_k \neq 0$ for each $k\in \{0,\ldots,m\}$ and $\sum_{k=0}^m a_k(x) f(x)^k = 0$ for every $x \in D$.
	Then $f$ is also algebraic over $\mathbb{F}$.
\end{lemma}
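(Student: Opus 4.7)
Plan: My plan is to prove this by elimination theory — specifically iterated resultants — producing a non-zero polynomial $T(Y, X_1, \ldots, X_n) \in \mathbb{F}[Y, X_1, \ldots, X_n]$ with $T(f(x), x) = 0$ on $D$, by eliminating the auxiliary variables representing the values of the $a_k$ from the identity $\sum_k a_k(x) f(x)^k = 0$.

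First, for each $k$ I would pick an irreducible non-zero $P_k(Y_k, X) \in \mathbb{F}[Y_k, X]$ with $P_k(a_k(x), x) = 0$ on $D$, which exists by the algebraicity of $a_k$ (pass to an irreducible factor that still vanishes on the graph of $a_k$). Setting $S(Y, Y_0, \ldots, Y_m) := \sum_{k=0}^m Y_k Y^k$, I would define iteratively $T_0 := S$ and $T_{k+1} := \mathrm{Res}_{Y_k}(T_k, P_k)$, and set $T := T_{m+1} \in \mathbb{F}[Y, X]$. The resultant's specialisation property — if $A$ and $B$ share a common root in the eliminated variable at some specialisation, then the resultant vanishes there — yields $T(f(x), x) = 0$ on $D$ by a straightforward induction on $k$: at stage $k$, the polynomials $T_k$ and $P_k$ share the common root $Y_k = a_k(x)$ at every $x \in D$.

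The main obstacle is showing $T \neq 0$. For this I would work in the algebraic closure $\overline{K}$ of $K := \mathbb{F}(X_1, \ldots, X_n)$. Standard elimination theory characterises the roots $\beta \in \overline{K}$ of $T$, viewed as a polynomial in $Y$ over $K$, as those $\beta$ for which there exist $\alpha_0, \ldots, \alpha_m \in \overline{K}$ with $P_k(\alpha_k) = 0$ for all $k$ and $\sum_k \alpha_k \beta^k = 0$. Because each $P_k$ is irreducible and $a_k$ is not identically zero on $D$, $P_k$ is not a scalar multiple of $Y_k$, so zero is not a root of $P_k$ and every admissible $\alpha_k$ is non-zero. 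In particular, for every valid tuple $(\alpha_0, \ldots, \alpha_m)$, the polynomial $\sum_k \alpha_k Y^k \in \overline{K}[Y]$ has leading coefficient $\alpha_m \ne 0$, hence degree exactly $m$ and at most $m$ roots in $\overline{K}$. Since there are only finitely many valid tuples — at most $\prod_k \deg_{Y_k} P_k$ — the polynomial $T$ has only finitely many roots in $\overline{K}$, so $T$ is a non-zero element of $K[Y]$, and clearing denominators gives a non-zero element of $\mathbb{F}[Y, X]$ witnessing the algebraicity of $f$. The one delicate step is the root characterisation of the iterated resultant; this is classical elimination theory, provable by induction on the number of eliminated variables using the identity $\mathrm{Res}_Z(A, B) = 0 \iff A$ and $B$ share a common factor of positive degree in $\overline{K}[Z]$, with a small amount of bookkeeping for degenerate specialisations where leading coefficients vanish.
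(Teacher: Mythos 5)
Your approach via iterated resultants is a genuinely different route from the paper's. The paper works abstractly with field extensions: it forms the field $\mathbb{K}$ of (germs of) rational functions on $D$, notes that adjoining the $a_k$ gives an algebraic extension $\mathbb{L}$ of $\mathbb{K}$, observes that $f$ is algebraic over $\mathbb{L}$ via the given relation, and concludes by transitivity of algebraic extensions. Yours is more explicit and produces a concrete witness polynomial $T$; the paper's is shorter but delegates the elimination to classical field theory.

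There is, however, a genuine gap in your first step. An irreducible $P_k \in \mathbb{F}[Y_k, X]$ with $P_k(a_k(x), x) = 0$ on all of $D$ need not exist. Take $\mathbb{F} = \mathbb{Q}$, $n = 1$, $D = (-1,1)$, and $a_k(x) = |x|$: this is algebraic over $\mathbb{Q}$ via $Y_k^2 - X^2$, but the graph of $|x|$ is Zariski-dense in the union of the two lines $Y_k = X$ and $Y_k = -X$, so any polynomial vanishing on the whole graph is divisible by $(Y_k - X)(Y_k + X)$, and the irreducible factors each vanish on only half the graph. So the parenthetical ``pass to an irreducible factor that still vanishes on the graph'' fails. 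Fortunately, you never actually need irreducibility: its only role in your argument is to ensure $P_k(0,X) \not\equiv 0$, i.e.\ that $0$ is not a root of $P_k$ over $\overline{K}$, and for this it suffices to take $P_k$ with $Y_k \nmid P_k$. That is always achievable: if $P_k = Y_k^j Q_k$ with $Y_k \nmid Q_k$ and $j \geq 1$, then $Q_k(a_k(x),x) = 0$ on the dense open set $\{a_k \neq 0\}$ (dense because $a_k$ is a nonzero algebraic function, so its zero set is nowhere dense), hence on all of $D$ by continuity, and one may replace $P_k$ by $Q_k$. With that substitution --- and the routine observation that the degenerate case in your root characterisation, where some intermediate $T_k$ specialises to the zero polynomial in $Y_k$, is handled by choosing an arbitrary root of $P_k$ and continuing the downward induction --- your argument goes through.
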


\begin{proof}
	Denote the set of $n$-variable rational functions over $\mathbb{F}$ with domain restricted to $D$ by $\mathbb{K}$.\footnote{Technically each element of $\mathbb{K}$ is an equivalence class of rational functions that agree on a set $Z \cap D$,
	where $Z$ is Zariski open subset of $\mathbb{R}^n$.}
	The set $\mathbb{K}$ is a field with the pointwise addition and multiplication operations and the maps $x \mapsto 0$, $x \mapsto 1$ as its zero and unit elements respectively.
	Fix $\mathbb{L}$ to be the set of all possible partially-defined functions formed from finitely many elements of $\mathbb{K} \cup \{a_0,\ldots,a_m\}$ by pointwise addition and multiplication and taking reciprocals.
	Note that every element of $\mathbb{L}$ is a partially-defined algebraic function $g: D \dashrightarrow \mathbb{R}$,
	in the sense that $g$ is a well-defined algebraic function when restricted to an open dense set $Z \cap D$,
	where $Z$ is a Zariski open set.
	Since each $a_i$ is an algebraic function over $\mathbb{F}$,
	$\mathbb{L}$ is an algebraic field extension of $\mathbb{K}$.
	By our construction,
	$f$ is contained within an algebraic field extension of $\mathbb{L}$;
	this field can be constructed in the same way that we constructed $\mathbb{L}$ except with the initial set $\mathbb{K} \cup \{f,a_0,\ldots,a_m\}$.
	Hence $f$ is contained within an algebraic extension of $\mathbb{K}$ (see for example \cite{lang}).
	Thus there exists polynomial functions $p_0,\ldots,p_\ell \in \mathbb{K}$ such that $\sum_{k=0}^m p_k(x) f(x)^k = 0$ for every $x \in D$ as required.
\end{proof}

Given a subset $\{n_1,\ldots,n_k\} \subset \{1,\ldots,n\}$,
fix $\mathbb{A}_D(X_{n_1},\ldots,X_{n_k};\mathbb{F})$ to be the set of algebraic functions over the field $\mathbb{F}$ with domain $D$ that only take variables $X_{n_1},\ldots,X_{n_k}$,
i.e., algebraic functions $f:D \rightarrow \mathbb{R}$ that are invariant under changing the value of any coordinate $i \notin \{n_1,\ldots,n_k\}$.
For simplicity we set $\mathbb{A}_D(X_{n_1},\ldots,X_{n_k} ) = \mathbb{A}_D(X_{n_1},\ldots,X_{n_k};\mathbb{Q})$.
For example,
the function $f: \mathbb{R}^2_{>0} \rightarrow \mathbb{R}$ where $f(x_1,x_2) = \sqrt{x_1}$ lies in the set $\mathbb{A}_{\mathbb{R}^2_{>0}} (X_1)$.

For the following result we recall that the transcendence degree of a field extension $\mathbb{K}/\mathbb{F}$ (denoted $\trdeg(\mathbb{K}/\mathbb{F})$) is the cardinality of the largest set $S \subset \mathbb{K}$ that is algebraically independent over $\mathbb{F}$.
Note that a finite set $S \subset \mathbb{K}$ is algebraically dependent over $\mathbb{F}$ if and only if $\trdeg(\mathbb{F}(S) /\mathbb{F}) < |S|$.
Transcendence degrees satisfy the following equation:
if $\mathbb{F} \subset \mathbb{K} \subset \mathbb{L}$ are fields then
\begin{equation*}
	\trdeg(\mathbb{L}/\mathbb{K}) + \trdeg(\mathbb{K}/\mathbb{F}) = \trdeg(\mathbb{L}/\mathbb{F}).
\end{equation*}

\begin{lemma}\label{l:polyalgdep}
	Let $\mathbb{F} \subset \mathbb{R}$ be a subfield and let $p \in \mathbb{F}[X]$.
	Suppose that the set $\{x_1,\ldots,x_k\}$ is algebraically dependent over $\mathbb{F}$.
	Then the set $\{p(x_1),\ldots,p(x_k)\}$ is algebraically dependent over $\mathbb{F}$.
\end{lemma}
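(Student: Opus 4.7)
The plan is to reduce the statement to a transcendence degree comparison between two subfields of $\mathbb{R}$. By hypothesis, the indexed set $\{x_1,\ldots,x_k\}$ satisfies a nontrivial polynomial relation over $\mathbb{F}$, which is equivalent to $\trdeg(\mathbb{F}(x_1,\ldots,x_k)/\mathbb{F}) < k$. I will show the analogous inequality holds with $x_i$ replaced by $p(x_i)$, from which algebraic dependence follows immediately.

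First I would observe that, since $p \in \mathbb{F}[X]$, each element $p(x_i)$ lies in the subfield $\mathbb{F}(x_i) \subseteq \mathbb{F}(x_1,\ldots,x_k)$. Consequently the field generated by the images satisfies the inclusion
\begin{equation*}
	\mathbb{F}(p(x_1),\ldots,p(x_k)) \;\subseteq\; \mathbb{F}(x_1,\ldots,x_k).
\end{equation*}
Since transcendence degree is monotone under inclusion of field extensions (a standard fact, obtainable from the additivity formula recalled just before the lemma), we get
\begin{equation*}
	\trdeg\big(\mathbb{F}(p(x_1),\ldots,p(x_k))/\mathbb{F}\big) \;\leq\; \trdeg\big(\mathbb{F}(x_1,\ldots,x_k)/\mathbb{F}\big) \;<\; k.
\end{equation*}
This inequality is exactly the statement that $\{p(x_1),\ldots,p(x_k)\}$ is algebraically dependent over $\mathbb{F}$, by the characterisation of algebraic dependence in terms of transcendence degree recalled in the excerpt.

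The one subtlety I would address explicitly is the footnote convention that an indexed ``algebraically independent set'' must have $|I|$ distinct elements: if two of the values $p(x_i), p(x_j)$ happen to coincide, then by that convention the indexed set is automatically algebraically dependent (witnessed by $Y_i - Y_j = 0$), so there is nothing to prove in this degenerate case; the transcendence-degree argument above handles the remaining case uniformly. I do not expect any real obstacle — the proof is essentially a one-line application of the fact that polynomial images lie in the field generated by their inputs — so the write-up can be made short and self-contained.
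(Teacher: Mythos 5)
Your proposal is correct and is essentially the paper's own argument: the paper sets $\mathbb{L} = \mathbb{F}(x_1,\ldots,x_k)$ and $\mathbb{K} = \mathbb{F}(p(x_1),\ldots,p(x_k))$, notes $\mathbb{F} \subset \mathbb{K} \subset \mathbb{L}$, and deduces $\trdeg(\mathbb{K}/\mathbb{F}) \leq \trdeg(\mathbb{L}/\mathbb{F}) < k$ via the additivity formula — exactly the monotonicity step you invoke. Your extra remark about coinciding values $p(x_i) = p(x_j)$ is a harmless refinement the paper leaves implicit.
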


\begin{proof}
	Set $\mathbb{L} = \mathbb{F}(\{x_1,\ldots,x_k\})$ and $\mathbb{K} = \mathbb{F}(\{p(x_1),\ldots,p(x_k)\})$.
	Since $\mathbb{F} \subset \mathbb{K} \subset \mathbb{L}$ and the set $\{x_1,\ldots,x_k\}$ is algebraically dependent over $\mathbb{F}$,
	the following inequality holds:
	\begin{equation*}
		\trdeg(\mathbb{K}/\mathbb{F}) \leq \trdeg(\mathbb{L}/\mathbb{K}) + \trdeg(\mathbb{K}/\mathbb{F}) = \trdeg(\mathbb{L}/\mathbb{F}) < k.
\end{equation*}
	Hence $\{p(x_1),\ldots,p(x_k)\}$ is algebraically dependent over $\mathbb{F}$.
\end{proof}

\begin{lemma}\label{l:algfunalgdep}
	Let $\mathbb{F} \subset \mathbb{R}$ be a subfield, let $D \subset \mathbb{R}$ be an open set and let $f \in \mathbb{A}_D(X;\mathbb{F})$.
	If the set $\{x_1,\ldots,x_k\} \subset D$ is algebraically dependent over $\mathbb{F}$ then the set $\{f(x_1),\ldots,f(x_k)\}$ is algebraically dependent over $\mathbb{F}$.
\end{lemma}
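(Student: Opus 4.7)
The plan is to run essentially the same transcendence degree computation as in \Cref{l:polyalgdep}, with the preliminary step of showing that each specific value $f(x_i)$ is algebraic over $\mathbb{F}(x_i)$. Set $\mathbb{L} = \mathbb{F}(x_1,\ldots,x_k)$ and $\mathbb{M} = \mathbb{L}(f(x_1),\ldots,f(x_k))$. Once I establish that $\mathbb{M}/\mathbb{L}$ is an algebraic extension, the tower formula for transcendence degree gives
\begin{equation*}
\trdeg(\mathbb{F}(f(x_1),\ldots,f(x_k))/\mathbb{F}) \leq \trdeg(\mathbb{M}/\mathbb{F}) = \trdeg(\mathbb{M}/\mathbb{L}) + \trdeg(\mathbb{L}/\mathbb{F}) = \trdeg(\mathbb{L}/\mathbb{F}) < k,
\end{equation*}
which says exactly that $\{f(x_1),\ldots,f(x_k)\}$ is algebraically dependent over $\mathbb{F}$.

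The main work is therefore showing each $f(x_i)$ is algebraic over $\mathbb{L}$ (equivalently, over $\mathbb{F}(x_i)$). By the equivalent definition of an algebraic function recalled at the start of \Cref{sec:prelim}, one obtains non-zero polynomials $p_0,\ldots,p_m \in \mathbb{F}[X]$ with $\sum_{j=0}^m p_j(x) f(x)^j = 0$ for every $x \in D$. Naively substituting $x = x_i$ only yields a useful polynomial relation for $f(x_i)$ if some $p_j(x_i) \neq 0$. To guarantee this, I would first normalise the relation so that $\gcd(p_0,\ldots,p_m) = 1$ in $\mathbb{F}[X]$: let $d = \gcd(p_0,\ldots,p_m)$, set $q_j = p_j/d \in \mathbb{F}[X]$, and note that $\sum_j q_j(x)f(x)^j = 0$ on the cofinite (hence dense) subset of $D$ where $d(x) \neq 0$. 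Since $f$ is continuous and both sides are continuous functions of $x$, the relation then extends to all of $D$.

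Now the coprime polynomials $q_0,\ldots,q_m$ generate the unit ideal in the PID $\mathbb{F}[X]$, so by Bezout they share no common root in $\overline{\mathbb{F}}$, and in particular no common root in $D$. Hence for each $i$ there is some $j$ with $q_j(x_i) \neq 0$, and the polynomial $\sum_j q_j(x_i) Y^j \in \mathbb{F}(x_i)[Y]$ is a non-zero polynomial vanishing at $Y = f(x_i)$. Thus $f(x_i)$ is algebraic over $\mathbb{F}(x_i) \subset \mathbb{L}$, as required.

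The only subtlety I expect is the coprimality reduction step, which is why I would record it carefully; the rest is bookkeeping with transcendence degrees exactly parallel to \Cref{l:polyalgdep}. Note also that the statement uses the one-variable form $\mathbb{A}_D(X;\mathbb{F})$, so the polynomials $p_j$ are univariate and $\mathbb{F}[X]$ is a PID; if the lemma is later needed for multivariate algebraic functions the same proof goes through, replacing the Bezout step by the fact that an ideal of polynomials in $\mathbb{F}[X_1,\ldots,X_n]$ with no common zero contains $1$ by the Nullstellensatz.
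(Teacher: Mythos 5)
Your proof is correct and follows essentially the same route as the paper's: show each $f(x_i)$ is algebraic over $\mathbb{F}(x_i)$ and then run the transcendence-degree tower computation. The only difference is that you carefully justify, via the gcd normalisation, why some coefficient $q_j(x_i)$ is non-zero -- a point the paper's proof leaves implicit when it asserts that $f(x_i)$ lies in the algebraic closure of $\mathbb{F}(x_i)$.
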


\begin{proof}
	Fix polynomials $p_0,\ldots,p_m \in \mathbb{F}[X]$ such that $\sum_{i=0}^m p_i(x) f(x)^i = 0$ for all $x \in D$.
	Hence $f(x_i)$ lies in the algebraic closure of $\mathbb{F}(\{x_i\})$ for each $i \in \{1,\ldots,k\}$.
	Set $\mathbb{L} = \mathbb{F}(\{x_1,\ldots,x_k\})$ and $\mathbb{K} =\mathbb{F}(\{f(x_1),\ldots,(x_k)\})$,
	and denote the algebraic closure of $\mathbb{L}$ by $\overline{\mathbb{L}}$.
	As $\trdeg(\overline{\mathbb{L}}/\mathbb{L})= 0$ and $\{x_1,\ldots,x_k\}$ is algebraically dependent over $\mathbb{F}$,
	we have that $\trdeg(\overline{\mathbb{L}}/\mathbb{F})<k$.
	As $\mathbb{K} \subset \overline{\mathbb{L}}$,
	the following equality holds:
	\begin{equation*}
		\trdeg(\mathbb{K}/\mathbb{F}) \leq \trdeg(\overline{\mathbb{L}}/\mathbb{K}) + \trdeg(\mathbb{K}/\mathbb{F}) = \trdeg(\overline{\mathbb{L}}/\mathbb{F}) < k.
	\end{equation*}
	Hence $\{f(x_1),\ldots,f(x_k)\}$ is algebraically dependent over $\mathbb{F}$.
\end{proof}

\begin{lemma}\label{l:algfuninv}
	Let $\mathbb{F} \subset \mathbb{R}$ be a subfield, let $D_1,D_2 \subset \mathbb{R}$ be open sets and let $f: D_1 \rightarrow D_2$ be a bijective algebraic function over $\mathbb{F}$.
	Then the inverse $f^{-1} :D_2 \rightarrow D_1$ is an algebraic function over $\mathbb{F}$.  
\end{lemma}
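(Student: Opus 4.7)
The plan is to exploit the defining polynomial for $f$ by simply swapping the roles of its two variables. Since $f$ is an algebraic function over $\mathbb{F}$, by definition there exists a polynomial $p \in \mathbb{F}[Y, X]$ with $p \neq 0$ and $p(f(x), x) = 0$ for every $x \in D_1$. Define the polynomial $q(Y, X) := p(X, Y) \in \mathbb{F}[Y, X]$, which is nonzero since $p$ is nonzero. For any $y \in D_2$, setting $x = f^{-1}(y) \in D_1$, we obtain
\begin{equation*}
    q(f^{-1}(y), y) \;=\; p(y, f^{-1}(y)) \;=\; p(f(x), x) \;=\; 0,
\end{equation*}
which is the required polynomial identity witnessing that $f^{-1}$ is algebraic over $\mathbb{F}$.

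The only remaining ingredient in the definition of an algebraic function is that $f^{-1}$ is continuous as a map from $D_2$ to $D_1$. I would argue this by the standard fact that a continuous bijection between open subsets of $\mathbb{R}$ is a homeomorphism. Concretely, $f$ is a continuous bijection, and on each connected component of $D_1$ (an open interval) the restriction of $f$ must be strictly monotone with image an open interval. These images are pairwise disjoint open subintervals whose union is the open set $D_2$, and since $f$ is continuous and injective, each such restriction has a continuous inverse on its image. Patching these inverses together yields that $f^{-1}: D_2 \to D_1$ is continuous.

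There is no serious obstacle here: the algebraic content is essentially trivial once one observes that the defining equation of $f$ is symmetric in $Y$ and $X$ up to renaming, so the same polynomial (with variables swapped) serves as a witness for $f^{-1}$. The only mild subtlety is the continuity step, which relies on a standard one-dimensional topological argument rather than anything about algebraic functions.
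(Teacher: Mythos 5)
Your proposal is correct and takes essentially the same approach as the paper: take the witnessing polynomial $p$ with $p(f(x),x)=0$, substitute $x=f^{-1}(y)$, and read off a polynomial relation for $f^{-1}$. You additionally spell out the continuity of $f^{-1}$ (needed because the paper defines algebraic functions to be continuous), which the paper's one-line proof silently omits; your one-dimensional monotonicity argument (or equivalently, invariance of domain) handles this correctly.
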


\begin{proof}
	Fix $p \in \mathbb{F}[X,Y]$ to be the non-zero polynomial where $p(f(x),x)=0$ for each $x \in D_1$.
	Hence if we pick any $y \in D_2$ and apply the substitution $x = f^{-1}(y) \in D_1$ to our previous equation we see that $p(y,f^{-1}(y))=0$ as required.
\end{proof}

\end{appendix}


\begin{thebibliography}{1}
\bibitem{BDLRST}
C.~Bowen, S.~Durocher, M.~Löffler, A.~Rounds, A.~Schulz, C.D.~Tóth,
{\em Realization of simply connected polygonal linkages and recognition of unit disk contact trees}.
In: E.~Di Giacomo, A.~Lubiw (eds),
{\em Graph Drawing and Network Visualization: GD 2015},
Lecture Notes in Computer Science 9411, Springer, Cham (2015) 447--459.
\href{https://doi.org/10.1007/978-3-319-27261-0_37}{doi:10.1007/978-3-319-27261-0\_37}

\bibitem{BK96}
H.~Breu, D.G.~Kirkpatrick,
{\em On the complexity of recognizing intersection and touching graphs of disks}. 
In: F.J. Brandenburg (eds),
{\em Graph Drawing: GD 1995},
Lecture Notes in Computer Science 1027, Springer, Berlin, Heidelberg (1996) 88--98.
\href{https://doi.org/10.1007/BFb0021793}{doi:10.1007/BFb0021793}

\bibitem{congortheran2019} 
R. Connelly, S. Gortler, L. Theran, 
{\em Rigidity of sticky disks}, 
Proceedings of the Royal Society A 475:2222 (2019).
\href{https://doi.org/10.1098/rspa.2018.0773}{doi:10.1098/rspa.2018.0773}

\bibitem{dew21} 
S.~Dewar, 
{\em Homothetic packings of centrally symmetric convex bodies}, 
Geometriae Dedicata 216:11 (2022).
\href{https://doi.org/10.1007/s10711-022-00675-w}{doi:10.1007/s10711-022-00675-w}

\bibitem{dew22} 
S.~Dewar, 
{\em How many contacts can exist between oriented squares of various sizes?}, 
arXiv preprint (2022).
\href{https://arxiv.org/abs/2210.10422}{https://arxiv.org/abs/2210.10422}

\bibitem{EKZ03}
D.~Eppstein, G.~Kuperberg, G.~Ziegler,
{\em Fat 4-polytopes and fatter 3-spheres}.
In: A.~Bezdek (ed.),
{\em Discrete Geometry: in honor of W. Kuperberg's 60th birthday},
Pure Applied Mathematics 253, Marcel Dekker, New York (2003) 239--265.
\href{https://doi.org/10.1201/9780203911211}{doi:10.1201/9780203911211}

\bibitem{Glas20}
A.~Glazyrin,
{\em Contact graphs of ball packings},
Journal of Combinatorial Theory Series B 145 (2020) 323--340.
\href{https://doi.org/10.1016/j.jctb.2020.05.007}{doi:10.1016/j.jctb.2020.05.007}

\bibitem{gluck}
H.~Gluck,
{\em Almost all simply connected closed surfaces are rigid}.
In: L.C.~Glaser, T.B.~Rushing (eds),
{\em Geometric Topology}, 
Lecture Notes in Mathematics 438, Springer, Berlin, Heidelberg (1975) 225--239.
\href{https://link.springer.com/chapter/10.1007/BFb0066118}{doi:10.1007/BFb0066118}

\bibitem{gss} 
J.~Graver, B.~Servatius, H.~Servatius, 
{\em Combinatorial Rigidity}, 
Graduate Studies in Mathematics 2, American Mathematical Society, Providence (1993).
\href{https://www.ams.org/books/gsm/002/}{doi:10.1090/gsm/002}

\bibitem{Har}
H.~Harborth,
{\em Lösung zu Problem 664A},
Elemente der Mathematik 29 (1974) 14--15.

\bibitem{HK01}
P.~Hlin\v{e}n\'{y}, J.~Kratochv\'{i}l,
{\em Representing graphs by disks and balls (a survey of recognition-complexity results)},
Discrete Mathematics 229(1--3) (2001) 101--124.
\href{https://doi.org/10.1016/S0012-365X(00)00204-1}{doi:10.1016/S0012-365X(00)00204-1}

\bibitem{HopcroftTarjan}
J.~Hopcroft, R.~Tarjan, 
{\em Efficient Planarity Testing},
Journal of the ACM 21(4)(1974) 549–-568.
\href{https://doi.org/10.1145/321850.321852}{doi:10.1145/321850.321852}

\bibitem{JH97}
D.J. Jacobs, B. Hendrickson,
{\em An algorithm for two-dimensional rigidity percolation: the pebble game},
Journal of Computational Physics 137(2) (1997) 346--365.
\href{https://doi.org/10.1006/jcph.1997.5809}{doi:10.1006/jcph.1997.5809}


\bibitem{KNP}
B. Klemz, M. N\"{o}llenburg, R. Prutkin,
{\em Recognizing weighted disk contact graphs}.
In: E.~Di Giacomo, A.~Lubiw (eds),
{\em Graph Drawing and Network Visualization: GD 2015},
Lecture Notes in Computer Science 9411, Springer, Cham (2015) 433--446.
\href{https://doi.org/10.1007/978-3-319-27261-0_36}{doi:10.1007/978-3-319-27261-0\_36}

\bibitem{lang}
S.~Lang,
{\em Algebraic Extensions}.
In: S.~Lang,
{\em Algebra},
Graduate Texts in Mathematics 211,
Springer, New York, NY (2002) 223--259.
\href{https://doi.org/10.1007/978-1-4613-0041-0_5}{doi:10.1007/978-1-4613-0041-0\_5}

\bibitem{maxwell}
J.C.~Maxwell,
{\em On the calculation of the equilibrium and stiffness of frames}, 
The London, Edinburgh, and Dublin Philosophical Magazine and Journal of Science 27(182) (1864) 294--299.
\href{https://doi.org/10.1080/14786446408643668}{doi:10.1080/14786446408643668}

\bibitem{meeraetal}
A.~Ozkan, R.~Prabhu, T.~Baker, J.~Pence, J.~Peters, M.~Sitharam,
{\em Algorithm 990: Efficient atlasing and search of configuration spaces of point-sets constrained by distance intervals,}
ACM Transactions on Mathematical Software 44(4) (2018) article 48.
\href{https://doi.org/10.1145/3204472}{doi:10.1145/3204472}

\bibitem{rudin} 
W.~Rudin,
{\em Principals of mathematical analysis, 3rd edition}, 
McGraw-Hill, New York (1976).

\bibitem{stephenson}
K.~Stephenson, 
{\em Introduction to circle packing: the theory of discrete analytic functions},
Cambridge University Press, Cambridge (2005).

\bibitem{w96}
W.~Whiteley,
{\em Some matroids from discrete applied geometry}.
In: J.E.~Bonin, J.G.~Oxley, B.~Servatius (eds),
{\em Matroid theory},
Contemporary Mathematics 197,
American Mathematics Society, Providence (1996) 171--311.
\href{https://doi.org/10.1090/conm/197/02540}{doi:10.1090/conm/197/02540}


\end{thebibliography}
\end{document}